\newtheorem{theorem}{Theorem}
\newtheorem{prop}[theorem]{Proposition}
\newtheorem{lemma}[theorem]{Lemma}
\newtheorem{conj}{Conjecture}
\newtheorem{cor}[theorem]{Corollary}
\newcommand{\ba}{\mathbf{a}}
\newcommand{\bb}{\mathbf{b}}
\newcommand{\be}{\mathbf{e}}
\newcommand{\bi}{\mathbf{i}}
\newcommand{\br}{\mathbf{r}}
\newcommand{\bs}{\mathbf{s}}
\newcommand{\bv}{\mathbf{v}}
\newcommand{\bw}{\mathbf{w}}
\newcommand{\bx}{\mathbf{x}}
\newcommand{\bE}{\mathbf{E}}
\newcommand{\CC}{\mathbb{C}}
\newcommand{\II}{\mathbb{I}}
\newcommand{\JJ}{\mathbb{J}}
\newcommand{\NN}{\mathbb{N}}
\newcommand{\RR}{\mathbb{R}}
\newcommand{\ZZ}{\mathbb{Z}}
\DeclareMathOperator\spec{Spec}
\title{Adjacency Spectra of Random and Complete Hypergraphs}
\author{Joshua Cooper}
\begin{document}

\maketitle

\begin{abstract} We present progress on the problem of asymptotically describing the complex homogeneous adjacency eigenvalues of random and complete uniform hypergraphs.  There is a natural conjecture arising from analogy with random matrix theory that connects these spectra to that of the all-ones hypermatrix.  Several of the ingredients along a possible path to this conjecture are established, and may be of independent interest in spectral hypergraph/hypermatrix theory.  In particular, we provide a bound on the spectral radius of the symmetric Bernoulli hyperensemble, and that the spectrum of the complete \(k\)-uniform hypergraph for \(k=2,3\) is close to that of an appropriately scaled all-ones hypermatrix.
\end{abstract}

\section{Introduction}

Spectral hypergraph theory attempts to link the spectral structure of matrices and hypermatrices\footnote{There is a convention in the literature to refer to hypermatrices as ``tensors'', although the identification is not entirely consistent with other fields' use of these terms.  Other use simply ``matrix.''  We choose to use ``hypermatrix'' to refer to a multidimensional array in order to avoid any confusion on the matter.} associated with hypergraphs (i.e., set systems, usually in which every set has the same cardinality -- termed ``uniform'') to the combinatorial properties of those hypergraphs in a manner analogous to spectral graph theory.  It is far from straightforward to define appropriate hypermatrices and spectra of such objects, however, and the strengths and weaknesses of each theory are a subtle matter.  The main approaches to date can be roughly classified as follows:

\begin{enumerate}
\item {\bf Matrices and linear spectral theory}.  One natural tactic is to define various types of matrices associated with the hypergraph and relate their ``ordinary'' eigenvalues back to the hypergraph.  Prominent successes in this vein include Feng-Li \cite{MR1405722} and Mart\'{i}nez-Stark-Terras \cite{MR1814089}, who used vertex-edge incidence matrices; Lu-Peng \cite{MR2993134,MR3021338}, who defined a kind of higher-order adjacency matrix between sets of vertices instead of single vertices; and Rodriguez \cite{MR1890984,MR1995660,MR2523606}, who also defined a type of adjacency matrix by counting edges containing vertex pairs.
\item {\bf Hypermatrices and low-rank decompositions}.  Another approach is to define a {\it hypermatrix}, i.e., a higher (than two) dimensional array of numbers whose entries (for example) indicate whether the indices of that entry are a hyperedge; a natural choice of indicator is to use 0 and 1 as in the case of graphs.  The dimensionality of the array is known as the hypermatrix's ``order''; it is called ``cubical'' if the set of indices in each such dimension has equal cardinality; and it is said to have ``dimension \(n\)'' if this cardinality is \(n\).  Then, one asks for a decomposition into rank one hypermatrices (outer products of vectors with themselves), just as in the classical Spectral Theorem from linear algebra.  While low-rank decomposition of tensors (for present purposes, multilinear maps that are associated with hypermatrices under particular choices of basis) are a highly studied subtopic of applied multilinear algebra -- ``Tucker'' and ``canonical polyadic'' decompositions are umbrella terms for a number of such techniques -- it is not easy to see how to use them for adjacency hypermatrices.  Nonetheless, Friedman and Wigderson \cite{MR1325271} did essentially that with considerable success, later spawning deeper results such as those of Keevash-Lenz-Mubayi \cite{1304.0050, 1208.4863, 1208.5978, 1309.3584}, who have been able to connect quasirandomness, expansion properties, and ``second largest eigenvalues'' in a way that closely resembles Cheeger inequalities and the Expander Mixing Lemma of classical spectral graph theory.   Another, highly algebraic approach worth noting here is that described by Gnang, Elgammal, and Retakh (e.g., in \cite{MR2918215}).  Their strategy uses a variant of Bhattacharya-Mesner tensor algebra to define a different kind of spectral decomposition for hypermatrices.
\item {\bf Hypermatrices and polynomial maps}.  One can define hypermatrices associated with a hypergraph as above, but consider their entries as coefficients of multilinear forms and study the algebraic and analytic properties of the maps given by these polynomials and the homogeneous polynomials of which they are polarizations (when the hypermatrices are fully symmetric, i.e., permutation of index sets fixes the entries).  This approach to studying symmetric tensors dates back at least to Lyusternik-{\v{S}}nirel'man \cite{MR0029532}, but was resurrected by Lim \cite{Lim_singularvalues} in an analytic form and Qi \cite{MR2178089} in an equivalent but more algebraic fashion; in the latter case, the author gave a very useful (and intuitively appealing) description of the spectrum as the roots of a characteristic polynomial, the resultant of the polynomial system whose coefficients are drawn from the original hypermatrix with \(\lambda\) subtracted from its diagonal entries.  The subject one obtains by applying this definition of spectra of hypermatrices to adjacency (and related, such as Laplacian) hypermatrices has become a very rapidly growing field: see, for example, \cite{MR3163144,MR2996375,MR2435198,MR2476927,MR3141883,MR2900714,MR2996365,MR2472189,MR2974300,MR2900714,MR3116407,MR3175063,MR3230456,MR3194370,MR3148904,MR3141891,MR3091297,MR3091308,MR3181972,MR2854611,MR3141884}.  In work most relevant to the present manuscript, the complex homogeneous eigenpairs of the adjacency hypermatrix \(\mathcal{A}(H)\) are the object of study, i.e., solutions \((\lambda,\mathbf{v})\) to the equation
\[
\mathcal{A}(H) : \mathbf{v}^{\otimes k-1} = \lambda \mathbf{v}^{\circ (k-1)},
\]
where ``\(:\)'' denotes tensor contraction, ``\(\otimes\)'' tensor product, and ``\(\circ\)'' denotes the product of hypermatrices known variously as ``Hadamard'', ``Schur'', and ``entrywise''.  (See the next section for more on our choice of notation.)  For a \(k\)-uniform hypergraph \(H\), the left-hand side denotes ``multiplication on all but one side'' of the order \(k\) hypermatrix \(\mathcal{A}(H)\) by a vector, and the right-hand side denotes a scalar multiple of the same vector with all entries raised to the \((k-1)\)-st power.
\end{enumerate}

One of the most applicable topics in spectral graph theory is the theory of the spectrum of random graphs; this area serves as a crucial tool for understanding quasirandomness, graph expansion, and mixing time of Markov chains, for example.  A natural desideratum, therefore, is a description of the spectra of random (Erd\H{o}s-R\'{e}nyi) hypergraphs, i.e., hypergraphs in which edges are chosen according to iid Bernoulli distributions.  One very special case of this problem, when the probability of including an edge is \(1\), is the question of describing the spectrum of the complete graph.  In the case of graphs, a complete description of the eigenvalues (and eigenvectors) of complete hypergraphs is an easy exercise, but the same task is surprisingly difficult for hypergraphs.  For hypergraphs of uniformity \(3\), \cite{MR2900714} gives a kind of implicit description that is not very illuminating and does not seem to generalize well.  For higher uniformities, the adjacency hypermatrix is very close to the (scaled) all-ones hypermatrix, the eigenpairs of which were completely described in \cite{1301.4590}, differing only in the vanishing fraction of entries corresponding to repeated indices (i.e., degenerate edges).  Therefore, a theorem to the effect that, if two hypermatrices are ``close'' in some sense, their spectra are close as well would suffice to understand the spectrum of complete hypergraphs.  We venture the following conjecture:

\begin{conj} \label{conj:Weyl-like} Suppose \(A\) and \(B\) are hypermatrices so that \(\|A-B\| \leq \epsilon\) for some norm \(\|\cdot\|\) (or spectral radius) and \(\epsilon > 0\).  There there is a bijection \(\rho\) between the eigenvalues (with multiplicity) of \(A\) and \(B\) and a function \(f\) with \(\lim_{\epsilon \rightarrow 0} f(\epsilon) = 0\) so that \(|\lambda - \rho(\lambda)| < f(\epsilon)\) for each eigenvalue \(\lambda\).
\end{conj}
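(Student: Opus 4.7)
The plan is to pass through Qi's characteristic polynomial and thereby reduce the conjecture to a classical univariate continuity-of-roots statement. For a cubical order-$k$ dimension-$n$ hypermatrix $A$ I set
\[
\phi_A(\lambda) := \operatorname{Res}\bigl(\lambda\, \bx^{\circ(k-1)} - A : \bx^{\otimes(k-1)}\bigr),
\]
so that by Qi's theorem, together with the Cartwright-Sturmfels degree formula, $\phi_A$ is a polynomial in $\lambda$ of degree $d = n(k-1)^{n-1}$ whose multiset of roots is exactly $\spec(A)$ with algebraic multiplicity. With eigenvalues realized as roots of a single univariate polynomial, the conjecture becomes a quantitative statement that the roots of $\phi_A$ depend continuously on $A$.

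The execution would then proceed in two steps. First, since the resultant is polynomial in its coefficients, the map $A \mapsto \phi_A$ is polynomial, and I would establish an inequality of the form
\[
\|\phi_A - \phi_B\|_\infty \;\le\; C(n,k,R)\, \|A - B\|, \qquad R := \max(\|A\|,\|B\|),
\]
where the left-hand norm is the coefficient $\ell^\infty$ norm and $C$ depends polynomially on $R$, reflecting the total degree of the resultant in each entry of $A$; such an inequality follows from a mean-value estimate applied to the multilinear resultant, with partial derivatives bounded via a Poisson-type product formula. Second, I would invoke Ostrowski's root-perturbation theorem: if $p, q$ are monic complex polynomials of degree $d$ with $\|p - q\|_\infty \le \delta$, there is a bijection $\rho$ between their roots satisfying $|\alpha - \rho(\alpha)| \le C_d\, \delta^{1/d}$ with $C_d$ depending only on $d$ and the scale of the roots. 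Combining the two steps yields a bijection between $\spec(A)$ and $\spec(B)$ with discrepancy
\[
f(\varepsilon) = O\bigl((C(n,k,R)\,\varepsilon)^{1/d}\bigr),
\]
which is a precise form of the conjecture.

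The main obstacle will be producing a genuinely useful constant $C(n,k,R)$. The resultant has degree $(k-1)^{n-1}$ in each entry of $A$, so the naive coefficient bound grows rapidly in $n$; coupled with the $1/d$-exponent coming from Ostrowski, this leaves a considerable gap between what seems accessible and what one would want for applications to random hypergraph spectra. Sharpening $C$ will likely require replacing the resultant by a more structured representation --- a B\'{e}zoutian, a Macaulay matrix, or a hyperdeterminantal construction in the sense of Gelfand-Kapranov-Zelevinsky --- so that $\phi_A - \phi_B$ factors through objects whose sensitivities are easier to estimate. A secondary, bookkeeping issue is verifying that $\phi_A$ is monic, or at least has leading coefficient bounded away from zero uniformly in $A$, so that Ostrowski's theorem applies with uniform constants; this follows from the standard normalization of the resultant but must be checked.
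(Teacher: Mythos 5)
This statement is a \emph{conjecture} in the paper, not a theorem; the paper offers no proof, only a program. The paper's sketched route (Section 2, equations (1)--(5)) is a variational one: parametrize $A(t) = A_0 + Bt$, assume the eigenpairs $(\bv_j(t),\lambda_j(t))$ can be taken piecewise smooth, differentiate $A:\bv^{\otimes k} = \lambda$ to get $\lambda' = A' : \bv^{\otimes k}$, and integrate along the path to obtain $|\lambda_j(A_0)-\lambda_j(A_1)| \le \|B\|_\Sigma$ for a suitable domain $\Sigma$ of normalized eigenvectors. Section 5 establishes the piecewise smoothness of the eigenvalues themselves; the crucial missing pieces, which the paper explicitly flags as open, are smoothness of eigenvectors and a bound on $\lambda'$ by a norm-like function of $A'$. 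The payoff of that route, if it could be completed, is a bound \emph{linear} in the perturbation, i.e., a genuine Weyl-type inequality.

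Your route --- push the eigenvalues into Qi's characteristic polynomial $\phi_A(\lambda) = \det(\lambda\,\mathbb{I}_n^k - A)$, estimate $\|\phi_A - \phi_B\|_\infty$ by the polynomial dependence of the resultant on the entries, then invoke Ostrowski's root-perturbation theorem --- is genuinely different, and it does establish the conjecture in its weakest qualitative reading: for fixed $(n,k)$, continuity of roots in coefficients is classical, $\phi_A$ is indeed monic of degree $d = n(k-1)^{n-1}$, and so some $f(\epsilon)\to 0$ exists. But note that read this narrowly, the statement is essentially folklore and would not merit the label ``conjecture''; the paper's intent, signaled by the phrase ``Weyl-like'' and by the application in the surrounding sections, is a bound that remains useful as $n\to\infty$. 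Your bound $f(\epsilon) = O\bigl((C(n,k,R)\,\epsilon)^{1/d}\bigr)$ degrades catastrophically in $n$: the exponent $1/d$ with $d = n(k-1)^{n-1}$, and the constant $C(n,k,R)$ (the resultant has degree $(k-1)^{n-1}$ in each entry, so $C$ is at least that bad), together make the estimate vacuous precisely in the regime the paper cares about --- comparing the random or complete adjacency hypermatrix to the scaled all-ones hypermatrix at growing $n$. You correctly flag this gap yourself. The honest comparison, then: your approach closes the qualitative statement at fixed dimension by a different and more elementary mechanism than the paper contemplates, but it cannot reach the paper's target; the paper's variational approach could in principle reach it (a bound $\le \|B\|_\Sigma$, independent of $d$), but is itself incomplete, with the smooth-eigenvector step and the choice of $\Sigma$ unresolved.
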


This would imply, among other results, that that the eigenvalues of a complete hypergraph come close to the eigenvalues of an all-ones hypermatrix, ignoring multiplicities; below, we actually prove this for \(k=2\) and \(k=3\).  
Indeed, inequalities of this flavor -- the Weyl, Hoffman-Wielandt, and Lidskii inequalities most prominently -- are an important topic in matrix theory.  The standard proofs of such statements, however, rely on the Spectral Theorem via the Courant-Fischer Theorem, two key tools which are as of yet (mostly) missing for hypermatrices.  However, there are ``variational'' proofs that do not rely on the Spectral Theorem (see, for example, \cite{MR1335452,MR2906465}); the idea is to show that the eigenvalues vary (piecewise) smoothly along a curve in matrix space connecting \(A\) and \(B\) and that the derivative of the eigenvalues are bounded by some norm of \(A-B\), so that one can integrate along the path to obtain a perturbational result.  Analogizing such proofs for hypermatrices is far from straightforward but should be possible.  For example, \cite{LiQiYu} provides a smoothness result for spectral radii.  In the present paper, we apply results from analytic function theory (see \cite{MR2806697,1309.2151}) to show that all eigenvalues do indeed vary piecewise smoothly along line segments connecting \(A\) and \(B\) in hypermatrix space.  The step of showing that the derivative is bounded by a norm-like function of \(A-B\) is the next main problem to address in this program, and is made difficult both by the fact that matrix-theoretic proofs of analogous results use smoothness of the eigenvectors (something that is difficult to generalize to hypermatrices because the singular surfaces involved are no longer linear spaces but some more complicated projective varieties) and ordinary norms are not in general holomorphic (because they involve the modulus function).  However, there is reason to believe that the relatively simple differential behavior of algebraic sets along curves will permit the effective use of homotopy techniques (tools such as \cite{MR1294431,MR2219034,MR2806697}).

One reason to push for a generalization of perturbational inequalities such as the conjecture above is that it would also lead to a full (asymptotic almost sure) description of the eigenvalues of random hypergraphs.  Indeed, the adjacency hypermatrix of a random hypergraph is, with high probability, very close in any reasonable norm to another appropriately scaled copy of the all-ones hypermatrix, so that the above program would also yield the distribution of their spectra.  Since the eigenvalues of random graphs (and the closely related topic of random \(0/1\) and random \(\pm 1\) matrix ensembles of various types) is another central and important area in matrix theory, this is a natural problem to consider.  Below, we show that random \(\pm 1\) symmetric hypermatrices have small spectral radius (maximum modulus of an eigenvalue) with high probability in a certain precise sense, so again the missing piece of this argument is a Weyl-type inequality.

In the next section, we introduce notation and definitions used throughout, as well as outline an argument approaching Conjecture \ref{conj:Weyl-like}.  In Section \ref{sec:complete}, we show that the difference between a suitably scaled all-ones hypermatrix and the adjacency hypermatrix of a complete hypergraph has a small spectral radius, and then prove that the eigenvalues of these two hypermatrices are indeed close (in Minkowski set distance) for \(k=2\) and \(k=3\).  In Section \ref{sec:random}, we carry out similar analysis for random hypergraphs.  Then, in Section \ref{sec:smootheigs}, we show that the eigenvalues along a line segment of hypermatrices vary piecewise smoothly.

\section{Definitions}

For a hypergraph \(H = (V(H),E(H))\) of uniformity \(k\) (i.e., \(E(H) \subset \binom{V(H)}{k}\)), we define its {\it adjacency hypermatrix} as the hypermatrix \(A_H = [a_{i_1,\ldots,i_k}]\) whose \((i_1,\ldots,i_k)\)-th entry is given by
\[
a_{i_1,\ldots,i_k} = \frac{1}{(k-1)!} \left \{ \begin{array}{ll} 1 & \text{ if } \{i_1,\ldots,i_k\} \in E(H) \\ 0 & \text{ otherwise.} \end{array} \right .
\]
Here, \(\bi = (i_1,\ldots,i_k)\) is referred to as a {\it multiindex}.  A hypermatrix is {\it symmetric} if its entries are invariant under permutation of the entries of its multiindices.  The {\it shape} of a hypermatrix is the vector of lengths of its index sets; the dimension of this vector is the {\it order} of the hypermatrix.  If all entries of the shape are equal a hypermatrix is {\it cubical}, and its common index set size is the {\it dimension}; note that the adjacency hypermatrix of a hypergraph is cubical of dimension equal to the number of vertices.  Given two hypermatrices \(A\) and \(B\) of the same shape, write \(A \circ B\) for the {\it Hadamard product}, i.e., the hypermatrix of this same shape whose entries are given by
\[
(A \circ B)_{\bi} = A_{\bi} B_{\bi}
\]
where \(\bi\) is a multiindex.  Write \(\otimes\) for {\it tensor product}, i.e., given \(A\) and \(B\) hypermatrices of shapes \(\ba = (a_1,\ldots,a_s)\) and \(\bb = (b_1,\ldots,b_t)\), for \(\bi = (c_1,\ldots,c_{s+t})\) with \(c_j \in [a_j]\) for \(j \in [s]\) and \(c_j \in [b_{j-s}]\) for \(j \in [s+t] \setminus [s]\),
\[
(A \otimes B)_{\bi} = A_{c_1,\ldots,c_s} B_{c_{s+1},\ldots,c_{s+t}}.
\]
In general, given an (associative) binary operator \(\ast\) and an object \(X\) to which it is applied, we define \(X^{\ast 1} = X\) and, inductively, \(X^{\ast k} = X \ast X^{\ast (k-1)}\) for integers \(k > 1\).  Write \(\JJ_n^k\) for the cubical order \(k\) hypermatrix of dimension \(n\) all of whose entries are \(1\), i.e., \(\JJ_n^k = \hat{1}^{\otimes k}\), where \(\hat{1}\) is the all-ones vector of dimension \(n\); if \(k=1\) we omit the superscript; write \(\II_n^k\) for the cubical order \(k\) identity hypermatrix of dimension \(n\), i.e., all of whose diagonal entries (all entries of the multiindex equal) are \(1\) and \(0\) otherwise.  Given two hypermatrices \(A\) and \(B\) with \(A\) order \(k\) and a set of indices \(S \subset [k]\) with \(|S| = s\) and \(B\) of order \(s\), write \(A :_{S} B\) for {\it tensor contraction}:
\[
(A :_{S} B)_{\bi=i_1,\ldots,i_{k-s}} = \sum_{a_1,\ldots,a_s = 1}^n A_{\bi(a_1,\ldots,a_s;S)}B_{a_1,\ldots,a_s}
\]
where \(\bi(a_1,\ldots,a_s;S)\), with \(S = \{j_1,\ldots,j_s\}\), denotes the multiindex whose \(t\)-th coordinate is \(i_r\) if \(t \not \in S\) and \(t\) is the \(r\)-th largest element of \([n] \setminus S\) or is \(j_r\) if \(t \in S\) and \(t\) is the \(r\)-th largest element of \(S\).  If \(S = [k]\) or \(A\) is symmetric, then we omit the subscript on ``\(:\)'' and simply write \(A : B\) (if the shape of \(A\) and \(B\) are the same, this reduces to the Frobenius product of \(A\) and \(B\); thus, the choice of notation).  Finally, in a slight abuse of notation, we write \(A^{: t}\) for \(\JJ_n^k : A^{\circ t} = A^{\circ (t-1)} : A\).

An {\it eigenpair} of a hypermatrix \(A\) is a pair \((\bv,\lambda) \in \RR^n \times \RR\) with \(\bv \neq \hat{0}\) so that
\[
A : \bv^{\otimes k-1} = \lambda \bv^{\circ k-1}
\]
By \cite{MR2178089}, in close analogy to linear algebra, it is also possible to define the eigenvalues (the \(\lambda\) above) as the roots of the equation
\[
\det(\lambda \II_n^k - A) = 0
\]
where \(\det(B)\) is the {\it hyperdeterminant} of hypermatrix \(B\), defined as the resultant of the polynomial coordinates of \(B : \bx^{\otimes k}\), where \(\bx\) is a vector of variables.  (The definition of hyperdeterminant/resultant is difficult to give explicitly; we refer to \cite{MR2394437} for the interested reader.)  

In the interest of approaching Conjecture 1, we make the following argument.  Suppose that \(A = A(t)\) is a smooth hypermatrix-valued function, \(t \in [0,1]\), with the property that it is possible to assign indices to the \(n (k-1)^n\) eigenpairs of \(A(t)\) so that \((\bv_j(t),\lambda_j(t))\) is a smooth function of \(t\).  Then we may write
\begin{equation} \label{eq1}
A : \bv^{\otimes k-1} = \lambda \bv^{\circ k-1}.
\end{equation}
Assume for the moment that
\begin{equation} \label{eq2}
\bv^{: k} = 1
\end{equation}
as well.  This assumption loses no generality, because eigenvectors can be freely scaled, as long as \(\bv^{: k} \neq 0\); otherwise, a different argument must be made.  Differentiating (\ref{eq2}) (and writing the derivative of \(f(t)\) with respect to \(t\) as \(f^\prime\)), we obtain
\begin{equation} \label{eq3}
\bv^{\circ k-1} : \bv^\prime + \bv^{\circ k-2} \circ {\bv^\prime} : \bv + \bv^{\circ k-3} \circ {\bv^\prime} : \bv^{\circ 2} + \cdots = 0.
\end{equation}
Note, however, that all addends on the left-hand side are equal, so they are all zero.  Operate on both sides of (\ref{eq1}) on the right by ``\(: \bv\)'' to obtain
\begin{equation} \label{eq4}
A : \bv^{\otimes k} = \lambda \bv^{: k} = \lambda.
\end{equation}
Then, differentiating (\ref{eq4}), we obtain
\begin{align*}
{\lambda^\prime} &= \frac{d}{dt} \left ( A : \bv^{\otimes k} \right ) \\
&= {A^\prime} : \bv^{\otimes k} + A : ({\bv^\prime} \otimes \bv^{\otimes k-1}) +  A : (\bv \otimes {\bv^\prime} \otimes \bv^{\otimes k-2}) + \cdots \\
&= {A^\prime} : \bv^{\otimes k} + k A : (\bv^{\otimes k-1} \otimes {\bv^\prime}) \\
&= {A^\prime} : \bv^{\otimes k} + k A : \bv^{\otimes k-1} : {\bv^\prime} \\
&= {A^\prime} : \bv^{\otimes k} + k \lambda \bv^{\circ k-1} : {\bv^\prime} \\
&= {A^\prime} : \bv^{\otimes k},
\end{align*}
by symmetry of \(A\) and (\ref{eq3}).  Continuing, we have
\[
{\lambda^\prime} = {A^\prime} : \bv^{\otimes k} \leq \sup_{\bv \in \Sigma} \left ( {A^\prime} : \bv^{\otimes k} \right ) \eqqcolon \left \| {A^\prime} \right \|_\Sigma,
\]
where \(\Sigma \subset \CC^n\) is some set containing all eigenvectors \(\bv_j(t)\) for all \(t \in [0,1]\).  Therefore,
\[
\left | \lambda_j(0) - \lambda_j(1) \right | \leq \int_0^1 \left \| {A^\prime}(t) \right \|_\Sigma \, dt
\]
In particular, if \(A(t) = A_0 + Bt\) and define \(A_1 = A(1)\), we obtain
\begin{equation} \label{eq5}
\left | \lambda_j(A_0) - \lambda_j(A_1) \right | \leq \| B \|_\Sigma.
\end{equation}
Even if the \(\lambda_j\) and \(\bv_j\) are only piecewise smooth, this argument still provides the same upper bound by telescoping the sum of the piecewise integrals obtained by breaking the \(t\)-path at singular points.  If a reasonable set \(\Sigma\) can be chosen -- in particular, so that \(\|B\|_\Sigma\) is small -- the result is a Weyl-type inequality and thus a description of the spectrum of the complete and random hypergraphs.  In Section \ref{sec:smootheigs}, we show that the eigenvalues of \(A(t)\) indeed vary piecewise smoothly in \(t\), so the problem remains to show that eigenvectors do as well.

\section{Complete Hypergraphs}
\label{sec:complete}

Let \(B(n,k)\) denote the difference between the appropriately scaled all-ones hypermatrix and the adjacency matrix of the complete \(k\)-uniform hypergraph, i.e.,
\[
B(n,k) = \frac{1}{(k-1)!} \mathbb{J}_n^k - \mathcal{A}(K_n^k).
\]
For a hypermatrix \(A\), define the spectral radius of \(A\) by
\[
\rho(A) = \max_{\lambda \in \spec(A)} |\lambda|.
\]
We apply the following lemma from \cite{MR2685169}.

\begin{lemma} \label{lem:YY} Suppose \(A\) is a nonnegative order \(k\), dimension \(n\) hypermatrix with entries \(a_{i_1\cdots i_k}\).  Then
\[
\min_{j} \sum_{i_2,\ldots,i_k \in [n]} a_{ji_2\cdots i_k} \leq \rho(A) \leq \max_{j} \sum_{i_2,\ldots,i_k \in [n]} a_{ji_2\cdots i_k}
\]
\end{lemma}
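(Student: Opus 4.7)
The plan is to prove this via the hypermatrix analogue of Perron-Frobenius theory, which I would invoke as a black box from the cited reference (due to Chang, Pearson, and Zhang, or equivalently from Friedland-Gaubert-Han): for any nonnegative cubical hypermatrix $A$, the spectral radius $\rho(A)$ is itself an eigenvalue with a nonnegative eigenvector, and moreover $\rho(A)$ admits a Collatz-Wielandt characterization
\[
\rho(A) \;=\; \max_{\bv > 0}\,\min_{j}\, \frac{(A:\bv^{\otimes k-1})_j}{v_j^{k-1}} \;=\; \min_{\bv > 0}\,\max_{j}\, \frac{(A:\bv^{\otimes k-1})_j}{v_j^{k-1}}.
\]
Once this is in hand, both inequalities follow from testing against $\bv = \hat{1}$.

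For the upper bound, let $\bv \geq 0$ be a Perron-type eigenvector with $A : \bv^{\otimes k-1} = \rho(A)\,\bv^{\circ k-1}$. Pick $j^*$ with $v_{j^*} = \max_i v_i > 0$ (positivity of the max holds since $\bv \neq \hat{0}$). Reading off the $j^*$-th coordinate of the eigenvalue equation,
\[
\rho(A)\,v_{j^*}^{k-1} \;=\; \sum_{i_2,\ldots,i_k \in [n]} a_{j^*\,i_2\cdots i_k}\, v_{i_2}\cdots v_{i_k} \;\leq\; v_{j^*}^{k-1} \sum_{i_2,\ldots,i_k \in [n]} a_{j^*\,i_2\cdots i_k},
\]
using nonnegativity of the entries and $v_{i_s} \leq v_{j^*}$. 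Dividing by $v_{j^*}^{k-1}$ yields $\rho(A) \leq \sum_{i_2,\ldots,i_k} a_{j^*\,i_2\cdots i_k} \leq \max_j \sum_{i_2,\ldots,i_k} a_{j\,i_2\cdots i_k}$, as desired.

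For the lower bound, plug $\bv = \hat{1}$ into the max-min form of the Collatz-Wielandt identity. Then $(A:\hat{1}^{\otimes k-1})_j = \sum_{i_2,\ldots,i_k} a_{j\,i_2\cdots i_k}$ and the denominator is $1$, so
\[
\rho(A) \;\geq\; \min_{j}\, \sum_{i_2,\ldots,i_k \in [n]} a_{j\,i_2\cdots i_k}.
\]

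The main obstacle is the reducible case: the Collatz-Wielandt identity and existence of a strictly positive Perron eigenvector are cleanest when $A$ is weakly irreducible. I would handle the general case by a standard perturbation: replace $A$ with $A_\epsilon = A + \epsilon \JJ_n^k$, which is positive hence weakly irreducible, apply the argument to obtain both bounds for $A_\epsilon$, and then take $\epsilon \to 0^+$ using continuity of eigenvalues in the entries (which, per the paper's own Section~\ref{sec:smootheigs}, holds along line segments in hypermatrix space). The bounds on the row sums of $A_\epsilon$ converge to those of $A$, so the inequalities persist in the limit.
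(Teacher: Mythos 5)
The paper does not actually prove this lemma; it invokes it as a citation from \cite{MR2685169} (Yang and Yang's Perron--Frobenius paper for nonnegative tensors) and uses it as a black box. So your proposal is not competing with an in-paper argument but with the proof in that reference, and it is a faithful reconstruction of it: Yang--Yang establish the Collatz--Wielandt bound $\min_j (A:\bv^{\otimes k-1})_j / v_j^{k-1} \le \rho(A) \le \max_j (A:\bv^{\otimes k-1})_j / v_j^{k-1}$ for every positive $\bv$, and the lemma is precisely the specialization $\bv = \hat 1$. Your upper-bound argument (take a nonnegative Perron eigenvector, pass to the coordinate $j^*$ of largest entry, dominate each $v_{i_s}$ by $v_{j^*}$) is the standard direct proof of that half and needs only existence of a nonnegative eigenvector for $\rho(A)$, which holds for all nonnegative tensors without any irreducibility hypothesis; the $\epsilon\JJ_n^k$ perturbation is therefore only genuinely needed to justify the max--min Collatz--Wielandt identity used for the lower bound, where your perturb-and-take-limits argument (with continuity of $\rho$ in the entries for nonnegative tensors) is the right fix. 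The proposal is correct, and its route is essentially the one the paper implicitly relies on via the citation.
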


It is simple to obtain a bound on the spectral radius of \(B(n,k)\) from this lemma.

\begin{theorem} For \(n > k\), we have
\[
\rho(B(n,k)) \leq \frac{(n-1)^{k-2}}{(k-2)!}.
\]
\end{theorem}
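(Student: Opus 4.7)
The plan is to apply Lemma~\ref{lem:YY} directly. By construction $B(n,k)$ is a nonnegative symmetric hypermatrix: its $(i_1,\ldots,i_k)$-entry equals $1/(k-1)!$ when the multiindex has at least one repeated coordinate and equals $0$ otherwise. Consequently the hypothesis of Lemma~\ref{lem:YY} is met, and I obtain
\[
\rho(B(n,k)) \;\leq\; \max_{j \in [n]} \sum_{i_2,\ldots,i_k \in [n]} B(n,k)_{j,i_2,\ldots,i_k}.
\]
Because $B(n,k)$ is invariant under the $\fS_n$-action on indices, every row sum takes the same value, so it suffices to evaluate the row sum for $j = 1$.

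To count the nonzero entries in row $1$ I pass to the complement. The number of $(k-1)$-tuples $(i_2,\ldots,i_k) \in [n]^{k-1}$ for which $(1,i_2,\ldots,i_k)$ has pairwise distinct entries is $(n-1)(n-2)\cdots(n-k+1)$ (choose $i_2,\ldots,i_k$ in turn from the remaining $n-j+1$ unused values), and each of the remaining $n^{k-1} - (n-1)(n-2)\cdots(n-k+1)$ tuples contributes $1/(k-1)!$. Hence
\[
\rho(B(n,k)) \;\leq\; \frac{n^{k-1} - (n-1)(n-2)\cdots(n-k+1)}{(k-1)!}.
\]

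The remaining task is to bound the right-hand side by $(n-1)^{k-2}/(k-2)!$ for $n > k$, which is a purely arithmetic inequality in $n$ and $k$. I would approach it through the telescoping identity
\[
n^{k-1} - (n-1)(n-2)\cdots(n-k+1) \;=\; \sum_{j=0}^{k-2}\bigl(P_j - P_{j+1}\bigr),
\]
where $P_j := n^{k-1-j}(n-1)(n-2)\cdots(n-j)$, so that $P_0 = n^{k-1}$ and $P_{k-1}$ is the falling factorial; a direct computation gives $P_j - P_{j+1} = (j+1)\, n^{k-2-j}(n-1)(n-2)\cdots(n-j)$. Bounding each summand against a suitable multiple of $(n-1)^{k-2}$ and combining with the prefactor $1/(k-1)!$ should close the argument. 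An alternative route would be induction on $k$ with the same telescoping serving as the inductive step.

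I expect this closing arithmetic step to be the only real obstacle; everything upstream is an immediate application of Lemma~\ref{lem:YY} together with an elementary counting argument. The delicate point is keeping the multiplicative constants tight enough to recover exactly the stated form $(n-1)^{k-2}/(k-2)!$, rather than a slightly larger variant, since the term-by-term telescoping bound needs to be balanced against the factorial denominator before the bound can be collected into the claimed closed form.
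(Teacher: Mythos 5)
Your computation of the row sum is correct and, notably, more careful than the paper's own argument: you correctly take the total number of $(k-1)$-tuples $(i_2,\ldots,i_k)\in[n]^{k-1}$ to be $n^{k-1}$, so the number of multiindices $(j,i_2,\ldots,i_k)$ containing a repetition is $n^{k-1}-(n-1)^{\underline{k-1}}$. (The paper instead writes $(n-1)^{k-1}-(n-1)^{\underline{k-1}}$, which omits exactly the tuples in which some $i_t$ coincides with $j$; those tuples are precisely the ones the factor $n$, not $n-1$, accounts for.)

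The problem is that the ``purely arithmetic'' closing step you defer to cannot be carried out: the inequality you need,
\[
n^{k-1}-(n-1)^{\underline{k-1}} \;\le\; (k-1)\,(n-1)^{k-2},
\]
is false. Already for $k=3$ the left-hand side is $n^{2}-(n-1)(n-2)=3n-2$ while the right-hand side is $2(n-1)=2n-2$, so the inequality fails for every $n$. Your own telescoping identity makes this visible: the $j=0$ term of $\sum_{j=0}^{k-2}(j+1)\,n^{\,k-2-j}(n-1)^{\underline{j}}$ is already $n^{k-2} > (n-1)^{k-2}$, and the sum of the coefficients $(j+1)$ is $\binom{k}{2}$, well above the available factor $k-1$, so no reorganization of constants will salvage the bound. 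Since all row sums of $B(n,k)$ are equal by symmetry, Lemma~\ref{lem:YY} in fact pins down $\rho(B(n,k))$ exactly to be $\bigl(n^{k-1}-(n-1)^{\underline{k-1}}\bigr)/(k-1)!$, which strictly exceeds $(n-1)^{k-2}/(k-2)!$ for every $k\ge 3$ and $n>k$ (e.g., $\rho(B(4,3))=5$ while the theorem asserts $\le 3$). So the gap in your proposal is not a missing computation but an impossible one; the correct bound obtainable from this row-sum method is $\bigl(n^{k-1}-(n-1)^{\underline{k-1}}\bigr)/(k-1)!$, which has the same asymptotic order $\Theta(n^{k-2})$ but a larger constant, and you should flag the discrepancy between your (correct) count and the statement rather than try to push the arithmetic through.
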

\begin{proof} The number of \(k\)-tuples \(i_1,\ldots,i_k\) with \(i_1 = j\) and no indices repeated (i.e., \(i_s \neq i_j\) for \(s \neq t\)) is \((n-1)^{\underline{k-1}}\).  Therefore, the number of \(k\)-tuples \(i_1,\ldots,i_k\) with \(i_1 = j\) and some index repeated is
\begin{align*}
(n-1)^{k-1} - (n-1)^{\underline{k-1}} &= \sum_{i=1}^{k-1} \binom{k-1}{i} (n-1)^{k-1-i} \\
&\leq (n-1)^{k-1} \sum_{i=1}^{k-1} \frac{1}{i!} \left( \frac{k-1}{n-1} \right )^{i} \\
&\leq (n-1)^{k-1} \sum_{i=1}^{\infty} \left( \frac{k-1}{n-1} \right )^{i} \\
&\leq (n-1)^{k-1} \frac{(k-1)/(n-1)}{1 - (k-1)/(n-1)} \\
&\leq (n-1)^{k-1} \frac{k-1}{n - k}\\
&\leq (k-1) (n-1)^{k-2}.
\end{align*}
Therefore, by Lemma \ref{lem:YY},
\[
\rho(B(n,k)) \leq \frac{1}{(k-1)!} (k-1) (n-1)^{k-2} = \frac{(n-1)^{k-2}}{(k-2)!}.
\]
\end{proof}

We are now in a position to prove a weak form of the conjectured similarity between eigenvalues of complete hypergraphs and the appropriately scaled all-ones hypermatrix, for \(k \leq 3\).  In particular, this result discards all information about multiplicities.  For two sets \(A, B \subset \mathbb{C}\), recall that the Minkowski distance \(\delta(A,B)\) between the sets is defined as \(\inf \{r : A \subset \mathcal{N}_r(B) \textrm{ and } B \subset \mathcal{N}_r(A)\}\), where \(\mathcal{N}_r(S) = \bigcup_{z \in S} B_r(z)\) and \(B_r(z) = \{w : |z-w| < r\}\).  Equivalently, \(A\) and \(B\) differ by \(\delta\) in Minkowski distance if there are maps \(f: A \rightarrow B\) and \(g : B \rightarrow A\) both of which are pointwise within \(\delta\) of the identity function.

\begin{theorem} \label{thm:k2and3} For \(k=2,3\) and \(n \rightarrow \infty\), the set of eigenvalues \(L\) of the \(k\)-uniform complete graph \(K^{(k)}_n\) on \(n\) vertices and the set of eigenvalues \(M\) of \((k-1)!^{-1} \mathbb{J}^k_n\) satisfy \(\delta(M,L) = o(n^{k-1})\).
\end{theorem}

Therefore, there is an approximate bijection between the set of eigenvalues of \(K^{(k)}_n\) and the set of eigenvalues of \((k-1)!^{-1} \mathbb{J}\).  Note that, by Lemma \ref{lem:YY}, the spectral radii of \(K^{(k)}_n\) and \((k-1)!^{-1} \mathbb{J}^k_n\) are \((k-1)!^{-1} n^{k-1} (1+o(1))\).  Thus, in this result, the order of magnitude of the Minkowski distance is such that only the largest (i.e., modulus \(\Theta(n^{k-1})\)) eigenvalues are meaningfully connected.  We employ the following result from \cite{1301.4590}:

\begin{theorem} \label{thm:charpolyJJ} Let \( \phi_n(\lambda) \) denote the characteristic polynomial of \( \JJ_n^k \), \( n \geq 2 \).  Then
\[
\phi_n(\lambda) = \lambda^{(n-1) (k-1)^{n-1}} \prod_{\substack{\br \in \NN^{k-1} \\ \br \cdot \hat{1} = n}} \left ( \lambda - (\br \cdot \hat{\zeta})^{k-1} \right )^{\mu(\br)/(k-1)}.
\]
where \( \hat{\zeta} \) denotes the vector \( (1,\zeta_{k-1},\ldots,\zeta_{k-1}^{k-2}) \) for any primitive $k$-th root of unity \(\zeta_{k-1}\), \(\hat{1}\) denotes the all-ones vector, and \(\mu(\br)\) is the number of ways to choose \( s_0,\ldots,s_{m-2} \) so that
\[
(\bs \cdot \hat{\zeta})^{k-1} = ( \br \cdot \hat{\zeta} )^{k-1}.
\]
\end{theorem}

We also make use of the following result from \cite{MR2900714}.

\begin{theorem} \label{complete3unif} The complete $3$-uniform hypergraph on $n$ vertices has eigenvalues $0, 1, \binom{n-1}{2}$, and at most $2n$ others, which can be found by substituting the roots of one of $n/2$ univariate quartic polynomials
\begin{align*}
P_t(c) & = \binom{t}{2}c^4 + t(n-t-1)c^3 + \left(\binom{n-t-1}{2} - \binom{t-1}{2}\right) c^2 \\
& \qquad - (t-1)(n-t)c - \binom{n-t}{2}
\end{align*}
for some \(1 \leq t \leq n/2\) into the quadratic polynomial
\begin{equation} \label{comp3uniflambda}
f_t(c) = \binom{t}{2}c^2 + t(n-t-1)c + \binom{n-t-1}{2}.
\end{equation}

\end{theorem}

\begin{proof}[Proof of Theorem \ref{thm:k2and3}] First, we consider \(k=2\).  There, the eigenvalues of the all-ones matrix \(\mathbb{J}^2_n\) are simply \(0\) and \(n\).  On the other hand, the eigenvalues of \(\mathcal{A}(K_n) = \mathbb{J}^2_n - I_n\) are \(-1\) and \(n-1\), and clearly \(0 = -1 + o(n)\) and \(n = n-1 + o(n)\).

Now, suppose \(k=3\).  Throughout the following, we use \(C_j\) to denote absolute constants.  By Theorem \ref{thm:charpolyJJ}, the eigenvalues of \(1/2 \mathbb{J}^3_n\) are \(0\) and one half the roots of 
\[
\phi_n(\lambda) = \prod_{\substack{a,b \geq 0 \\ a+b = n}} \left ( \lambda - (a-b)^2 \right ).
\]
Therefore, any nonzero eigenvalue \(\mu\) is given by \(\mu = \lambda/2 = s^2/2\) for some \(0 \leq s \leq n\) such that \(s\) and \(n\) agree in parity.  Let the set of all such eigenvalues be denoted \(M\).

On the other hand, by Theorem \ref{complete3unif}, the eigenvalues of \(\mathcal{A}(K^3_n)\) are $0, 1, \binom{n-1}{2}$ and a set \(S\) of at most \(2n\) others obtained by plugging roots of the polynomials \(P_t(c)\) into the polynomials \(f_t(c)\), for some \(0 \leq t \leq n/2\), as described above.  Note that \(0\), \(1\), and \(\binom{n-1}{2}\) are within \(o(n^2)\) of elements of \(M\), since \(0 \in M\) and \(n^2/2 \in M\).  

Now, choose \(t \in [0,n/2]\), and let \(\alpha = t/n\).
\begin{align*}
P_t(c) & = (t^2-t) c^4/2 + (nt-t^2-t) c^3 + ( n^2-2nt-3n +6t) c^2/2 \\
& \qquad + (-nt+n+t^2-t)c + (-n^2+2nt-t^2+n-t)/2 \\
& = \frac{n^2}{2} (\alpha^2 c^4 - 2 \alpha^2 c^3 + 2 \alpha c^3 + 2 \alpha^2 c - 2 \alpha c^2 - \alpha^2 - 2 \alpha c + c^2 + 2 \alpha - 1) \\ 
& \qquad - \frac{n}{2} (\alpha c^4 + 2 \alpha c^3 - 6 \alpha c^2 + 2 \alpha c + 3 c^2 + \alpha - 2 c - 1) \\
& = \frac{n^2}{2} (\alpha c - \alpha + 1)^2 (c + 1) (c - 1) \\
& \qquad - \frac{n}{2} (\alpha c^3 + 3\alpha c^2 - 3\alpha c - \alpha + 3 c + 1)(c - 1)
\end{align*}
Note that \(c=1\) is a root, so \(f_t(1) = n^2/2 - 3n/2 + 1
= n^2/2 + o(n^2)\) is an eigenvalue of \(K^{(3)}_n\). It is straightforward to verify that \(P_t(-1) \leq 0\) for all \(\alpha \in [0,1/2]\).  On the other hand, letting \(\epsilon = 3/(n (2\alpha-1))\), we have 
\[
P_t\left (-1+\epsilon \right) = -2(2\alpha-1)n \pm \frac{C_1}{(2\alpha-1)^4}
\]
for some absolute constant \(C_1\), which is positive for \( \alpha < 1/2 - C_2 n^{-1/5}\) (in which case \(\epsilon < C_3 n^{-4/5}\)).  Furthermore, letting \(\beta = (1/2-\alpha ) n^{1/5}\),
\[
P_t\left (-1 - \frac{2}{n^{1/5}} \right) = 2 (2 \beta - 1)^2 n^{7/5} \pm C_4 n^{6/5}
\]
so that, for \(\alpha > 1/2 - C_2 n^{-1/5}\) and \(n\) sufficiently large, this is positive.

Therefore, for any \(\alpha \in [0,1/2]\), \(P_t\) has a root between \(-1-\epsilon\) and \(-1\), say, \(-1 - \delta\), with \(0 \leq \delta < C_6 n^{-1/5}\), and 
\begin{align*}
f_t(-1-\delta) &= \frac{1}{2}(n-2t)^2 + \frac{1}{2} \delta^2 t^2 - \frac{1}{2} \delta^2 t - \delta n t + 2 \delta t^2 - \frac{3}{2} n + 2t + 1 \\
&= \frac{1}{2}(n-2t)^2 \pm C_7 n^{9/5}.
\end{align*}
This is \(\frac{1}{2}(n-2t)^2 + o(n^2)\), i.e., within \(o(n^2)\) of an eigenvalue of \(\frac{1}{2}\mathbb{J}\).  Furthermore, since there is some \(t \in \{0,\ldots,n/2\}\) for which this is true for any choice of eigenvalue \(s^2/2\), we have all elements of \(M\) are within \(o(n^2)\) of an element of \(L\).

Now, the discriminant of \(P_t(c)\) (with \(t=\alpha n\)) is
\[
\Delta = -\frac{1}{4} ((\alpha - 1) \alpha (4 n - 9) n^2 + (n-3)^3) (2 \alpha - 1)^2 (\alpha - 1) \alpha (n - 2)^2 n^6.
\]
Since, for \(\alpha \in (0,1/2)\), we have \((2\alpha - 1)^2 > 0\), \(\alpha - 1 < 0\), and \(\alpha > 0\), it follows that \(\Delta\) has the same sign as \((\alpha - 1) \alpha (4 n - 9) n^2 + (n-3)^3\).  For \(n > 3\), this is positive if
\[
\alpha < \frac{1}{2} - \frac{n-2}{2n} \sqrt{\frac{27}{4n - 9}} =: \frac{1}{2} - \eta .
\]
Since \(P_t(c)\) has at least one real root, if \(0 < \alpha < 1/2-\eta\), then all four roots of \(P_t(c)\) are real.  If \(1/2 > \alpha > 1/2 - \eta\), then \(P_t(c)\) has two real roots as described above, and another two conjugate complex roots.

In the regime where all roots of \(P_t(c)\) are real, we need only concern ourselves with negative values of \(f_t(c)\), since the spectral radius of \(K^{(3)}_n\) is \(\binom{n-1}{2}\), so any real nonnegative element of \(L\) lies in \([0,\binom{n-1}{2}]\), i.e., within \(o(n^2)\) (indeed, \(O(n)\)) of an element of \(M\).  If \(t \leq \sqrt{n}+1\), then \(f_t(c) < 0\) only when
\[
c < \frac{1-n+t+\sqrt{\frac{(n-t-1)(n-2)}{t}}}{t-1}.
\]
However,
\[
\frac{1-n+t+\sqrt{\frac{(n-t-1)(n-2)}{t}}}{t-1} \leq -\sqrt{n}+1+n^{1/4} < -\frac{1}{2} \sqrt{n}
\]
for sufficiently large \(n\).  Then, since \(P_t = c^2 f_t - g_t\) with \(g_t = \binom{t-1}{2}c^2 + (t-1)(n-t)c + \binom{n-t}{2}\), we have that \(P_t(c)=0\) implies
\begin{align*}
f_t(c) &= \frac{g_t(c)}{c^2} = \frac{\binom{t-1}{2}c^2 + (t-1)(n-t)c + \binom{n-t}{2}}{c^2} \\
&= \binom{t-1}{2} + \frac{(t-1)(n-t)}{c} + \frac{\binom{n-t}{2}}{c^2} \\
&> \binom{t-1}{2} - \frac{2(t-1)(n-t)}{\sqrt{n}} + \frac{\binom{n-t}{2}}{n} > -n^{3/2},
\end{align*}
so \(f_t(c) = o(n^2)\).  On the other hand, if \(t > \sqrt{n}+1\), then
\[
\min_c f_t(c) = \frac{n^2 - (n - 2) t - 3 n + 2}{2(1-t)} > -n^{3/2},
\]
which is again \(o(n^2)\).

It is straightforward (though laborious -- we employed SageMath \cite{sagemath}) to check using Cardano's Formula that, setting \(t = n/2 - \eta n\) for some \(\eta < (n-2) \sqrt{27/(n-9)}/(2n)\), the two complex roots of \(P_t(c)/(c-1)\) are \(c = -1 \pm C_8 n^{-1/2}\) for two conjugate choices of constants \(C_8\), and that plugging such values into \(f_t(c)\) yields \(O(n)\).  Therefore, every element of \(L\) is within \(o(n^2)\) of some element of \(M\), and the result follows.

\end{proof}
\section{Random Hypergraphs}
\label{sec:random}

Let \(D(n,k,p)\) denote the difference between the appropriately scaled all-ones hypermatrix and the adjacency matrix of the random \(k\)-uniform hypergraph with probability \(p \in (0,1)\), i.e.,
\[
D(n,k,p) = p \mathbb{J}_n^k - (k-1)! \mathcal{A}(G_k(n,p)).
\]
The special case of \(p = 1/2\) is of particular interest, since then \(2 D(n,k,1/2)\) is the random symmetric sign hyperensemble, i.e., an order \(k\), dimension \(n\) hypermatrix whose entries \(d_{\bi}\), where \(\bi = (i_1,\ldots,i_k)\) is a multi-index, are iid \(\pm 1\) with probability \(1/2\) if \(i_1 < \cdots < i_k\) and determined by symmetry for all other entries.  Let \(U(n,k,p)\) denote a random ``upper symplectic'' \(\{p,p-1\}\)-hypermatrix, i.e., an order \(k\), dimension \(n\) hypermatrix whose entries \(t^\prime_{\bi}\), where \(\bi = (i_1, \ldots,i_k)\) is a multi-index, are independently \(p\) with probability \(1-p\) or \(p-1\) with probability \(p\) if \(i_1 < \cdots < i_k\), and \(0\) otherwise.

For a vector \(\bv \in \CC^N\), denote by \(\|\bv\|_k\) the usual \(k\)-norm, i.e.,
\[
\|\bv\|_k = \left ( \sum_{j=1}^N |v_j|^k \right )^{1/k}.
\]



\begin{prop} Let \(\bv \in \CC^N\) be such that \(\|\bv\|_2 = 1\).  Then
\[
\Pr \left [ \|U : \bv^{\otimes (k-1)} \|_2 \geq t \right ] \leq C e^{-c t^2}
\]
for some absolute constants \(c,C > 0\), where \(U = U(t,k,p)\).
\end{prop}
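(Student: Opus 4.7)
The approach hinges on a key independence structure. Setting $\mathbf{X} := U : \bv^{\otimes(k-1)}$, the $j$-th coordinate is
\[
X_j = \sum_{j < i_2 < \cdots < i_k} U_{j,i_2,\ldots,i_k}\, v_{i_2}\cdots v_{i_k},
\]
and because $U$ is supported on strictly increasing multi-indices, the entries of $U$ used in $X_j$ are disjoint across distinct $j$. Hence $X_1, \ldots, X_N$ are jointly independent, which lets us analyze $\|\mathbf{X}\|_2^2 = \sum_j |X_j|^2$ as a sum of independent non-negative random variables rather than wrestling with a correlated polynomial form.

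The first step is a per-coordinate sub-Gaussian tail via Hoeffding. Each $X_j$ is a sum of independent, centered, bounded random variables whose squared coefficients satisfy
\[
\sum_{j<i_2<\cdots<i_k}|v_{i_2}\cdots v_{i_k}|^2 \;\leq\; \frac{1}{(k-1)!}\Bigl(\sum_{i>j}|v_i|^2\Bigr)^{k-1} \;\leq\; \frac{1}{(k-1)!},
\]
using $\|\bv\|_2 = 1$. Hoeffding's inequality (handling real and imaginary parts separately when $\bv \in \CC^N$) gives $\Pr[|X_j| \geq s] \leq 2e^{-c_1 s^2}$ with $c_1 > 0$ absolute, so each $|X_j|^2$ is sub-exponential with absolute parameters.

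The second step passes from per-coordinate control to control of the full norm. Viewing $\|\mathbf{X}\|_2$ as a function of the independent entries $\{U_\bi\}_{i_1 < \cdots < i_k}$, a single-entry perturbation within the range of diameter $1$ alters just one coordinate of $\mathbf{X}$, and by at most $|v_{i_2}\cdots v_{i_k}|$; hence the bounded-difference constants satisfy $c_\bi \leq |v_{i_2}\cdots v_{i_k}|$. Together with the direct mean bound $\EE\|\mathbf{X}\|_2^2 = \sum_j \mathrm{Var}(X_j) \leq p(1-p)N/(k-1)!$, McDiarmid's inequality then yields a sub-Gaussian tail for $\|\mathbf{X}\|_2$ around its mean, from which the asserted estimate follows after absorbing factors into the constants $C,c$.

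The main obstacle lies in the apparent dimension dependence. Since $\EE\|\mathbf{X}\|_2 = \Theta(\sqrt N)$, the naive McDiarmid calculation produces a variance proxy of order $N$ and hence a tail of the form $e^{-c t^2/N}$; the bound in the statement $Ce^{-ct^2}$ is therefore meaningful only in the deep-tail regime $t \gg \sqrt N$ (which is precisely where one needs it when eventually bounding a spectral radius via an $\varepsilon$-net on the unit sphere, since the $e^{-ct^2}$ rate must defeat an $e^{CN}$-sized net). To secure an absolute exponent in $t$ cleanly, one likely needs a sharper tool than McDiarmid---either a moment-method/trace computation of $\EE\|\mathbf{X}\|_2^{2m}$ exploiting the combinatorics of the strictly-increasing-index support, or a Talagrand-type convex concentration bound for norm-like functionals of bounded independent variables.
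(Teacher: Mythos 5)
Your opening move is exactly the paper's: because \(U\) is supported on strictly increasing multi-indices, the coordinates \(X_j = (U : \bv^{\otimes(k-1)})_j\) use disjoint subsets of the entries and are therefore jointly independent, and Azuma--Hoeffding gives \(\Pr[|X_j|\geq s]\leq 2e^{-s^2/2}\) once one notes \(\|\bv^{\otimes(k-1)}\|_2^2 = \|\bv\|_2^{2(k-1)} = 1\). Where you diverge is in how you pass from per-coordinate control to the full norm. You reach for McDiarmid applied to the map \(\{U_\bi\}\mapsto\|\mathbf{X}\|_2\); the paper instead integrates the per-coordinate tail to get \(\EE[\exp(|X_j|^2/4)]\leq C\), multiplies these over \(j\) using the independence you identified to obtain \(\EE[\exp(\|\mathbf{X}\|_2^2/4)]\leq C^n\), and then applies Markov. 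The MGF route is slightly cleaner because it avoids the need to control \(\EE\|\mathbf{X}\|_2\) separately (McDiarmid concentrates around the mean, so you would still have to bound the mean and add it back in), and the independence across coordinates does essentially all the work.

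On the issue you flag at the end: you are right that both routes produce a bound of the form \(e^{O(N)}e^{-ct^2}\) rather than one with truly absolute constants, and that such a bound is vacuous unless \(t \gtrsim \sqrt N\). The paper's proof has exactly this feature (the \(C^n\) in the MGF bound), and the Proposition's phrasing "absolute constants'' is a little loose. But your conclusion --- that one therefore needs a sharper tool such as a trace/moment computation or Talagrand's convex concentration --- overshoots. Trace the downstream use: the bound is applied only at \(t = B\sqrt{n\log n}\) against an \(\epsilon\)-net of size \((5\delta)^{-2n} = e^{O(n\log n)}\), and there the parasitic \(C^n = e^{O(n)}\) factor is dominated by \(e^{-cB^2 n\log n}\) for \(B\) large. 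So no upgrade of the concentration inequality is required; the proof of Theorem \ref{thm:randomradius} goes through with the \(N\)-dependent constant. Your instinct to worry about the constant is good, but in this case the resolution is to check the regime of application rather than to strengthen the lemma.
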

\begin{proof} Let \(U_i\) denote the order \(k-1\) hypermatrix obtained by fixing the first coordinate to \(i\), i.e.,
\[
U_i = U : \be_i,
\]
where \(\be_i\) is the \(i\)-th elementary vector.  By the Azuma-Hoeffding inequality,
\begin{align*}
\Pr \left [ |U_i : \bv^{\otimes (k-1)} | \geq s \right ] &\leq 2 \exp \left ( \frac{-s^2}{2\|\bv^{\otimes (k-1)}\|_2^2} \right ) \\
&= 2 \exp \left ( \frac{-s^2}{2\|\bv\|_2^{2(k-1)}} \right ) \\
&= 2 e^{-s^2/2}.
\end{align*}
Therefore, integrating by parts,
\begin{align*}
\bE \left [ \exp(|U_i : \bv^{\otimes (k-1)} |^2/4) \right ] & \leq 1 + \int_{0}^\infty \frac{s}{2} \cdot e^{s^2/4} \Pr \left [ |U_i : \bv^{\otimes (k-1)} | \geq s \right ] \, ds \\
& \leq 1 + \int_{0}^\infty s e^{-s^2/4} \, ds \\
&= C
\end{align*}
for some \(C > 0\).  Then,
\begin{align*}
\bE \left [ \exp(\|U: \bv^{\otimes (k-1)} \|_2^2/4) \right ] &= \bE \left [ \exp(\sum_i |U_i : \bv^{\otimes (k-1)} |^2/4) \right ] \\
&\leq \prod_i \bE \left [ \exp( |U_i : \bv^{\otimes (k-1)} |^2/4) \right ] \\
&\leq C^n,
\end{align*}
where the second first inequality follows from independence.  Then, setting \(A = \sqrt{\log C / 8}\), we have
\begin{align*}
\Pr \left [ \|U : \bv^{\otimes (k-1)} \|_2 \geq A \sqrt{n} \right ] &= \Pr \left [ \exp(\|U : \bv^{\otimes (k-1)} \|_2^2/4) \geq \exp(2 n \log C) \right ] \\
&\leq \frac{\bE \left [ \exp(\sum_i |U_i : \bv^{\otimes (k-1)} |^2/4) \right ]}{\exp(2 n \log C)} \\
&\leq C^n e^{-2n \log C} = e^{- n \log C},
\end{align*}
by Markov's Inequality.
\end{proof}

\begin{cor} \label{cor:onevec} Let \(D = D(n,k,p)\) for \(p \in (0,1)\). Let \(\bv \in \CC^N\) be such that \(\|\bv\|_2 = 1\).  Then
\[
\Pr \left [ \|D : \bv^{\otimes (k-1)} \|_2 \geq t \right ] \leq C e^{-c t^2}
\]
for some constants \(c,C > 0\) (possibly depending on \(k\)), where \(D = D(n,k,p)\).
\end{cor}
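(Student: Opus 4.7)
The strategy is to reduce the corollary to the preceding proposition by writing $D$ as a symmetrized sum of shifted copies of $U$ plus a deterministic residue. Coupling the edge indicators $X_e$ of $G_k(n,p)$ to $U=U(n,k,p)$ via $U_{j_1<\cdots<j_k} = p - X_{\{j_1,\ldots,j_k\}}$, one obtains $D_{\bi} = U_{\mathrm{sort}(\bi)} = p - X_{\{i_1,\ldots,i_k\}}$ when $\bi$ has distinct entries and $D_{\bi} = p$ otherwise. Letting $E$ denote the indicator hypermatrix of degenerate multi-indices, I decompose $D = Z + pE$, where $Z$ is the mean-zero symmetric extension of $U$ supported on distinct multi-indices. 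The triangle inequality then reduces the task to concentrating $\|Z : \bv^{\otimes(k-1)}\|_2$ and controlling the deterministic residue $p\|E : \bv^{\otimes(k-1)}\|_2$.

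For the random part, I exploit the fact that each multi-index with distinct entries has a unique sorting permutation. Defining, for each $\sigma \in \fS_k$, the hypermatrix $U^\sigma$ by $(U^\sigma)_{i_1,\ldots,i_k} = U_{i_{\sigma(1)},\ldots,i_{\sigma(k)}}$, one verifies $Z = \sum_{\sigma \in \fS_k} U^\sigma$, so by the triangle inequality and a union bound it suffices to establish a tail bound of the form $\Pr[\|U^\sigma:\bv^{\otimes(k-1)}\|_2 \geq t] \leq Ce^{-ct^2}$ for each individual $\sigma$. Setting $r^* = \sigma^{-1}(1)$, one computes
\[
(U^\sigma : \bv^{\otimes(k-1)})_i = \sum_{j_1<\cdots<j_k,\, j_{r^*} = i} U_{j_1,\ldots,j_k}\prod_{r \neq r^*} v_{j_r}.
\]
The key structural observation is that the constraint $j_{r^*} = i$ partitions the nonzero support of $U$ across the values of $i$, so the coordinates of $U^\sigma : \bv^{\otimes(k-1)}$ involve disjoint and hence independent collections of $U$-entries; each is moreover subgaussian with variance proxy $\sum_{j_1<\cdots<j_k,\, j_{r^*}=i} \prod_{r \neq r^*}|v_{j_r}|^2 \leq \|\bv\|_2^{2(k-1)} = 1$. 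The proof of the proposition---Azuma on each coordinate, integration by parts to control the squared-exponential moment, the product bound across independent coordinates, and Markov---then goes through verbatim with constants depending only on $k$.

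The principal obstacle is the deterministic residue $pE : \bv^{\otimes(k-1)}$, since without care $\|E : \bv^{\otimes(k-1)}\|_2$ can grow polynomially in $n$ for certain choices of $\bv$ (e.g., $\bv = \be_1$). The plan is to expand
\[
(E : \bv^{\otimes(k-1)})_i = (\hat{1}\cdot\bv)^{k-1} - \sum_{\substack{(i_2,\ldots,i_k) \in ([n]\setminus\{i\})^{k-1} \\ \text{pairwise distinct}}} v_{i_2}\cdots v_{i_k}
\]
and apply inclusion-exclusion on the number of coinciding indices, so that the residue is absorbed into the regime $t \gtrsim \sqrt n$ in which both the proposition and the corollary are genuinely informative (mirroring how the preceding proposition is really a statement of concentration on scale $\sqrt n$). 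The remaining bookkeeping is light: the combinatorial factor $k!$ arising from summing contributions over $\sigma \in \fS_k$ is absorbed into the constant $C$ while shrinking $c$ by a factor of $(k!)^2$.
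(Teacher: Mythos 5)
Your decomposition of $D$ and your treatment of the genuinely random part track the paper's proof closely: the paper also symmetrizes, writing $D$ as a sum of the $k!$ hypermatrices $U^\sigma$, applies the triangle inequality to $\|D:\bv^{\otimes(k-1)}\|_2$, and closes with a union bound over $\sigma\in\fS_k$, absorbing the $k!$ factors into $C',c'$. You go further than the paper in two respects. First, you verify explicitly that the proof of the proposition actually transports to every $U^\sigma$ by identifying $r^*=\sigma^{-1}(1)$ and checking that the constraint $j_{r^*}=i$ still partitions the support of $U$, so the coordinates of $U^\sigma:\bv^{\otimes(k-1)}$ remain independent; the paper takes this for granted. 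Second, and more importantly, you notice that $\sum_{\sigma\in\fS_k}U^\sigma$ is \emph{not} equal to $D$: it vanishes on every multi-index with a repeated coordinate, whereas $D$ equals $p$ there. The paper's triangle-inequality step therefore implicitly discards the deterministic residue $pE:\bv^{\otimes(k-1)}$, which is a real gap in the paper's proof that you correctly flag.

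However, your fix for the residue is not yet right. You assert that $p\|E:\bv^{\otimes(k-1)}\|_2$ can be ``absorbed into the regime $t\gtrsim\sqrt n$,'' but the residue is in fact larger than that in general. For a unit vector $\bv$ one has, bounding by which pair of coordinates of $(i,j_2,\dots,j_k)$ collides,
\[
\bigl|(E:\bv^{\otimes(k-1)})_i\bigr| \;\le\; (k-1)\,|v_i|\,\|\bv\|_1^{k-2} \;+\; \binom{k-1}{2}\,\|\bv\|_1^{k-3},
\]
which gives $\|E:\bv^{\otimes(k-1)}\|_2 = O_k\bigl(\|\bv\|_1^{k-2}\bigr) = O_k\bigl(n^{(k-2)/2}\bigr)$, and taking $\bv=\hat 1/\sqrt n$ shows this order is attained: the number of degenerate tuples $(i,j_2,\dots,j_k)$ is $\Theta(n^{k-2})$, so each coordinate is $\Theta(n^{(k-3)/2})$ and the $2$-norm is $\Theta(n^{(k-2)/2})$. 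For $k\ge 4$ this exceeds $\sqrt{n\log n}$, so the residue is not subsumed at the scale where the corollary is subsequently applied, and since the residue is deterministic, no exponential tail bound with $n$-independent $c,C$ can hold for such $\bv$ in that range of $t$. So rather than a routine bookkeeping step, the residue is where the real difficulty lives; either the claim needs to be restricted to $t$ above a $k$-dependent power of $n$, or the residue must be subtracted off (e.g.\ by replacing $D$ with its centered version $D-pE$), before the argument can close for general $k$.
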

\begin{proof}
For a permutation \(\sigma : [k] \rightarrow [k]\) and an order \(k\), dimension \(n\) cubical hypermatrix \(M\) whose \(\bi = (i_1,\ldots,i_k)\) multiindexed entry is \(m_{\bi}\), write \(M^\sigma\) for the order \(k\), dimension \(n\) cubical hypermatrix whose \(\bi^\sigma = (i_{\sigma(1)},\ldots,i_{\sigma(k)})\) entry is \(m_{\bi^\sigma}\).   By the triangle inequality,
\[
\|D : \bv^{\otimes (k-1)} \|_2 \leq \sum_{\sigma \in \mathfrak{S}_k} \|U^\sigma : \bv^{\otimes (k-1)} \|_2
\]
whence
\begin{align*}
\Pr \left [ \|D : \bv^{\otimes (k-1)} \|_2 \geq t \right ] &\leq k! \Pr \left [ \|U : \bv^{\otimes (k-1)} \|_2 \geq \frac{t}{k!} \right ] \\
& \leq C k! e^{-c (t/k!)^2} = C^\prime e^{-c^\prime t^2}.
\end{align*}
\end{proof}

\begin{prop} We have
\[
\Pr \left [ \exists \bv \in \CC^n : \|\bv\|_2 = 1 \textrm{ and } \|D : \bv^{\otimes (k-1)} \|_2 \geq B \sqrt{n \log n} \right ] = o(1).
\]
for some constant \(B\) depending on \(k\).
\end{prop}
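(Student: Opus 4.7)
My plan is to upgrade the pointwise concentration bound of Corollary~\ref{cor:onevec} to a uniform bound over the whole unit sphere via the standard $\epsilon$-net technique, closed by a Lipschitz-type estimate that exploits the symmetry of $D$.

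First, view the complex unit sphere $\{\bv \in \CC^n : \|\bv\|_2 = 1\}$ as a $(2n-1)$-sphere in $\RR^{2n}$ and fix an $\epsilon$-net $\mathcal{N}_\epsilon$ with $|\mathcal{N}_\epsilon| \leq (3/\epsilon)^{2n}$ by the usual volumetric argument. Applying Corollary~\ref{cor:onevec} at each $\bv \in \mathcal{N}_\epsilon$ with $t = B_0 \sqrt{n \log n}$ and taking a union bound gives
\[
\Pr\!\left[\exists \bv \in \mathcal{N}_\epsilon : \|D : \bv^{\otimes(k-1)}\|_2 \geq B_0 \sqrt{n \log n}\right] \leq \left(\tfrac{3}{\epsilon}\right)^{2n} C e^{-c B_0^2 n \log n},
\]
which is $o(1)$ for any fixed $B_0,\epsilon > 0$, since $-cB_0^2 n \log n + 2n\log(3/\epsilon) \to -\infty$.

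Next, I extend the bound from the net to the whole sphere. Set $h(\bv) := \|D : \bv^{\otimes(k-1)}\|_2$ and $M := \sup_{\|\bv\|_2=1} h(\bv)$. For an arbitrary unit vector $\bv$, pick $\bw \in \mathcal{N}_\epsilon$ with $\|\bv - \bw\|_2 \leq \epsilon$ and telescope:
\[
\bv^{\otimes(k-1)} - \bw^{\otimes(k-1)} = \sum_{i=0}^{k-2} \bw^{\otimes i} \otimes (\bv - \bw) \otimes \bv^{\otimes(k-2-i)}.
\]
By the reverse triangle inequality and multilinearity, $|h(\bv) - h(\bw)| \leq (k-1)\, g\, \epsilon$, where $g := \sup \|D : (\mathbf{u}_1 \otimes \cdots \otimes \mathbf{u}_{k-1})\|_2$, the supremum taken over unit $\mathbf{u}_1, \ldots, \mathbf{u}_{k-1}$. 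To return $g$ to a statement about $M$, associate with $D$ the fully symmetric scalar $k$-linear form $T(\mathbf{u}_1,\ldots,\mathbf{u}_k) := \sum_{\bi} D_{\bi} u_{1,i_1} \cdots u_{k,i_k}$, so that $g = \sup_{\|\mathbf{u}_i\|_2 = 1} |T(\mathbf{u}_1, \ldots, \mathbf{u}_k)|$ (the bilinear and Hermitian pairings agree in modulus up to conjugation of $\mathbf{u}_k$). By Banach's polarization inequality, $g \leq C_k \sup_{\|\bv\|_2=1} |T(\bv,\ldots,\bv)|$ for a constant $C_k$ of order $k^k/k!$; and $|T(\bv,\ldots,\bv)| \leq h(\bv) \leq M$ by Cauchy--Schwarz. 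Hence $g \leq C_k M$, which plugged into the telescoping bound yields
\[
M \leq \max_{\bw \in \mathcal{N}_\epsilon} h(\bw) + C_k(k-1)\epsilon M.
\]
Choosing $\epsilon = 1/(2 C_k (k-1))$ produces $M \leq 2 \max_{\bw \in \mathcal{N}_\epsilon} h(\bw)$, and combining with the net bound (taking $B_0 = B/2$) completes the proof for $B$ sufficiently large in terms of $k$.

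The principal technical hurdle is the polarization step $g \leq C_k M$. Converting between the diagonal evaluation $T(\bv,\ldots,\bv)$ and the full injective norm $g$ is a classical result of Banach for scalar-valued symmetric multilinear forms, but some care is needed to handle the complex setting and the reduction from a vector-valued contraction $D : \bv^{\otimes(k-1)}$ to a scalar form --- here it suffices because the dual pairing against an extra unit vector $\mathbf{u}_k$ converts one back to the other. Once polarization is granted, the remainder of the argument reduces to standard $\epsilon$-net bookkeeping, which succeeds precisely because Corollary~\ref{cor:onevec} provides sub-Gaussian tails strong enough to defeat the $\exp(O(n))$ cardinality of the net.
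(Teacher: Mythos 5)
Your proof follows the same high-level scheme as the paper — union bound over a net, then extend to the whole sphere — but the crucial off-net extension step is handled by a genuinely different device. You take a constant-scale net $\epsilon = \Theta_k(1)$ and close the argument via a self-bounding inequality $M \leq \max_{\mathcal{N}_\epsilon} h + C_k(k-1)\epsilon M$, where the Lipschitz constant of $h(\bv) = \|D:\bv^{\otimes(k-1)}\|_2$ is tied back to $M$ itself through Banach's polarization inequality for symmetric multilinear forms. This is the standard rigorous route (it appears, e.g., in the random-tensor spectral-norm literature), and your handling of the complex case — replacing $\mathbf{u}_k$ by its conjugate to trade the bilinear pairing for a Hermitian one, and using Cauchy--Schwarz to get $|T(\bv,\ldots,\bv)| \leq h(\bv)$ — is correct. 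The paper instead takes a much finer net, $\delta = 1/(4k\sqrt{n})$, and bounds the off-net error by sums of terms like $\|\epsilon\|_2^j\|\bw\|_2^{k-1-j}$; but this implicitly asserts $\|D:(\mathbf{u}_1\otimes\cdots\otimes\mathbf{u}_{k-1})\|_2 \leq \prod\|\mathbf{u}_i\|_2$, i.e., that the multilinear operator norm of $D$ is at most $1$ — a bound of precisely the type the proposition is trying to establish (and much stronger than the conclusion), which is not justified. Your polarization step is exactly the missing ingredient that makes the extension from net to sphere rigorous, at the price of a slightly worse constant $B$ (absorbing the $k^k/k!$ polarization factor) and the need to invoke a nontrivial classical inequality; the paper's version, were the operator-norm bound available, would give a cleaner constant and avoid polarization, but as written it has a gap that your approach repairs.
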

\begin{proof} Let \(\delta = 1/(4k\sqrt{n})\) and let \(V\) denote the set of vectors \(\{\delta(s + s^\prime \sqrt{-1}) : -\delta^{-1} \leq s \leq \delta^{-1}, -\delta^{-1} \leq s^\prime \leq \delta^{-1} , s \in \ZZ, s^\prime \in \ZZ \}^n\) and
\[
V^\prime = \left \{ \frac{\bw}{\|\bw\|_2} : \bw \in V \right \},
\]
so that \(|V^\prime| \leq (5\delta)^{-2n}\).  Then, for any \(\bv \in \CC^N\) with \(\|\bv\|_2 = 1\), there is a \(\bw \in V^\prime\) so that, for each \(j \in [k]\),
\[
\max\{\Re(|\bv_j-\bw_j|),\Im(|\bv_j-\bw_j|)\} \leq \delta.
\]
Hence, letting \(\epsilon = \bv - \bw\), applying symmetry and the triangle inequality,
\begin{align*}
\|D : \bv^{\otimes (k-1)} \|_2 &= \|D : (\bw + \epsilon)^{\otimes (k-1)} \|_2 \\
&= \left \|D : (\bw^{\otimes (k-1)} + (k-1) \epsilon \otimes \bw^{\otimes (k-2)} \right . \\
& \qquad \left . + \binom{k-1}{2} \epsilon^{\otimes 2} \otimes \bw^{\otimes (k-3)} + \cdots ) \right \|_2 \\
&\leq \left \|D : \bw^{\otimes (k-1)} \right \|_2 + (k-1) \left \| \epsilon \|_2 \| \bw \right \|_2^{k-2} \\
& \qquad + \binom{k-1}{2} \left \| \epsilon \|_2^2 \| \bw \right \|_2^{k-3} + \cdots \\
&\leq \left \|D : \bw^{\otimes (k-1)} \right \|_2 + 2(k-1) \delta n^{1/2} \\
& \qquad + 4 \binom{k-1}{2} \delta^2 n + 8 \binom{k-1}{3} \delta^3 n^{3/2} + \cdots \\
&\leq \left \|D : \bw^{\otimes (k-1)} \right \|_2 + 2(k-1) \delta n^{1/2} \left ( 1 + 2(k-1)\delta n^{1/2} \right . \\
& \qquad \left . + 4(k-1)^2 \delta^2n + \cdots \right ) \\
&= \left \|D : \bw^{\otimes (k-1)} \right \|_2 + 2(k-1) \delta n^{1/2} \cdot \frac{1}{1-2(k-1)\delta n^{1/2}} \\
&\leq \left \|D : \bw^{\otimes (k-1)} \right \|_2 + 4(k-1) \delta n^{1/2} \\
&\leq \left \|D : \bw^{\otimes (k-1)} \right \|_2 + 1.
\end{align*}
Therefore, the the union bound and Corollary \ref{cor:onevec}, the probability that there exists a \(\bv \in \CC^n\) so that \(\|\bv\| = 1\) and \(\|D : \bv^{\otimes(k-1)}\| > B \sqrt{n 
\log n}\) is at most
\begin{align*} 
\sum_{w \in V^\prime} \Pr \left [ \left \|D : \bw^{\otimes (k-1)} \right \|_2 \geq B \sqrt{n \log n} \right ] & \leq (5 \delta)^{-2n} C e^{-c B^2 n \log n} \\
& \leq (400k^2)^{n} C n^{n(1-c B^2)}= o(1)
\end{align*}
for some constant \(B\) depending on \(k\).
\end{proof}

\begin{theorem} \label{thm:randomradius} For \(D = D(n,k,p)\), \(\rho(D) < B n^{(k-1)/2} \sqrt{\log n}\) with high probability for some constant \(B\) depending on \(k\).
\end{theorem}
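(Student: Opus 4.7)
The plan is to deduce the spectral radius bound directly from the preceding proposition by a short normalization argument. Let $\lambda$ be any eigenvalue of $D$ with corresponding eigenvector $\bv \in \CC^n$, $\bv \neq \hat 0$, so that $D : \bv^{\otimes k-1} = \lambda\, \bv^{\circ k-1}$. Rescale $\bv$ so that $\|\bv\|_2 = 1$; the eigenvalue equation is homogeneous of degree $k-1$ in $\bv$ on both sides, so rescaling does not change $\lambda$. Taking $\ell^2$-norms of both sides gives
\[
\|D : \bv^{\otimes k-1}\|_2 \;=\; |\lambda|\,\|\bv^{\circ k-1}\|_2 \;=\; |\lambda|\, \|\bv\|_{2(k-1)}^{k-1}.
\]
By the previous proposition, on an event of probability $1-o(1)$ the left-hand side is at most $B\sqrt{n\log n}$ \emph{simultaneously} for every unit vector in $\CC^n$, hence in particular for every eigenvector of $D$.

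To convert this into a bound on $|\lambda|$, I would supply a lower bound on $\|\bv\|_{2(k-1)}^{k-1}$ via the standard $\ell^p$ interpolation inequality: for $2 \le 2(k-1)$,
\[
1 \;=\; \|\bv\|_2 \;\leq\; n^{\frac{1}{2} - \frac{1}{2(k-1)}} \,\|\bv\|_{2(k-1)} \;=\; n^{\frac{k-2}{2(k-1)}} \,\|\bv\|_{2(k-1)},
\]
which rearranges to $\|\bv^{\circ k-1}\|_2 = \|\bv\|_{2(k-1)}^{k-1} \geq n^{-(k-2)/2}$. Combining this with the bound from the previous paragraph,
\[
|\lambda| \;\leq\; \frac{B\sqrt{n \log n}}{n^{-(k-2)/2}} \;=\; B\, n^{(k-1)/2}\sqrt{\log n}.
\]
Since $\lambda$ was an arbitrary eigenvalue and the high-probability event is the same for all of them, this gives $\rho(D) \leq B\, n^{(k-1)/2}\sqrt{\log n}$ with probability $1-o(1)$.

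I do not anticipate a substantive obstacle in this final step: the work has been done by the epsilon-net argument plus Azuma-Hoeffding concentration in the previous proposition. The only conceptual points to verify are (i) that the proposition's uniform bound applies to complex-valued eigenvectors, which it does because the statement is already quantified over all $\bv \in \CC^n$ with $\|\bv\|_2=1$, and (ii) the identification $\|\bv^{\circ (k-1)}\|_2 = \|\bv\|_{2(k-1)}^{k-1}$, which is the natural bridge between the hypermatrix-action norm $\|D:\bv^{\otimes(k-1)}\|_2$ appearing on the left of the eigenvalue equation and the standard Euclidean normalization of eigenvectors. The final constant $B$ will differ from the one in the preceding proposition, but only by a factor independent of $n$ (depending on $k$).
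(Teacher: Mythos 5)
Your proof is correct and follows essentially the same route as the paper: normalize $\|\bv\|_2=1$, apply the previous proposition's uniform high-probability bound on $\|D:\bv^{\otimes(k-1)}\|_2$, and lower-bound $\|\bv^{\circ(k-1)}\|_2$ by $n^{1-k/2}$. Your $\ell^p$ interpolation inequality $\|\bv\|_2 \le n^{1/2-1/(2(k-1))}\|\bv\|_{2(k-1)}$ is just a repackaging of the H\"older step the paper performs explicitly against the all-ones vector, and the resulting exponent $n^{-(k-2)/2}=n^{1-k/2}$ matches exactly.
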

\begin{proof} Suppose
\[
D : \bv^{\otimes k-1} = \lambda \bv^{\circ k-1}.
\]
Since this equation is homogeneous, we assume that \(\|\bv\|_2 = 1\) by scaling \(\bv\).  By H\"older's Inequality,
\begin{align*}
\left \|\bv^{\circ k-1} \right \|_2 &= \left \|\bv^{\circ 2} \right \|_{(k-1)}^{(k-1)/2} \\
&= \left (\left \|\bv^{\circ 2} \right \|_{k-1} \frac{\left \|\hat{1} \right \|_{(k-1)/(k-2)}}{\left \|\hat{1} \right \|_{(k-1)/(k-2)}} \right )^{(k-1)/2} \\
&\geq \|\bv^{\circ 2}\|_1 n^{1-k/2} \\
&= \|\bv\|_2^2 n^{1-k/2} = n^{1-k/2}.
\end{align*}
Hence,
\begin{align*}
\left \| D : \bv^{\otimes k-1} \right \|_2 &= |\lambda| \left \| \bv^{\circ k-1} \right \|_2 \\
&\geq |\lambda| n^{1-k/2}.
\end{align*}
By the previous result, with high probability this equation holds for every such \(\lambda\) as long as the eigenvalue of largest modulus satisfies
\[
|\lambda| n^{1-k/2} < B \sqrt{n \log n},
\]
i.e.,
\[
|\lambda| < B n^{(k-1)/2} \sqrt{\log n}.
\]
\end{proof}

Note that this bound is not quite tight for \(k=2\).  Indeed, a famous result of F\"{u}redi and Koml\'{o}s (\cite{FurediKomlos}) is that the random symmetric sign ensemble has spectral radius \(O(\sqrt{n})\) with high probability, instead of \(O(\sqrt{n \log n})\), as the above result implies.  Therefore, the question remains of whether Theorem \ref{thm:randomradius} is tight for \(k > 2\).

\section{Piecewise Smoothness of Eigenvalues}
\label{sec:smootheigs}

Suppose \(A = A(t)\) is a real-analytic function in \(t \in [0,1]\) of order \(k\), dimension \(n\) real symmetric hypermatrices.  Then, by \cite{MR2178089}, \(f(q,t) = \det(A(t)-qI)\) is a monic polynomial in \(q\) of degree \(n(k-1)^{n-1}\) with real-analytic coefficients, polynomials in the entries of \(A(t)\).  For any fixed \(t \in [0,1]\), we may factor
\[
f(q,t) = \prod_{i=1}^{r(t)} (q - \lambda_i(t))
\]
where \(\{\lambda_i\}_{i=1}^{r(t)}\) are the distinct roots of \(f(q,t)\).  For each positive integer \(m\), we define the \(m\)-th generalized discriminant by
\[
\Delta_m f(q,t) = \sum_{S \in \binom{[r(t)]}{m}} \prod_{\substack{i, j \in S \\ i \neq j}} \left (\lambda_{i} - \lambda_{j} \right)^2.
\]
(When \(m=1\), \(\Delta_1 f(q,t) = n(k-1)^{n-1}\).)  By \cite{MR2806697}, the number of distinct \(q\)-roots of \(f(q,t)\) is the largest \(m\) so that \(\Delta_m f(q,t) \neq 0\).  Since \(\Delta_m f(q,t)\) is a polynomial whose coefficients are polynomials in the coefficients of \(f\), its coefficients are also real-analytic, so it is identically zero or its roots are discrete (and therefore finite in number in \([0,1]\)).  Let \(R\) be the largest integer so that it is not identically zero.  Then \(r(t) = R\) for all but a finite set of points 
\(E\), and we may write 
\[
F(q,t) = \Delta_R f(q,t) = \sum_{S \in \binom{[R]}{m}} \prod_{\substack{i, j \in S \\ i \neq j}} \left (\lambda_{i} - \lambda_{j} \right)^2,
\]
a real-analytic, nonzero function on all but \(E^\prime = [0,1] \setminus E\).  Then, by Lemma 3.2 of \cite{MR2806697}, we can factor \(F(q,t)\) as \(F_1(q,t) \cdots F_R(q,t)\) in an open interval \(I\) about any point \(t \in E^\prime\) so that the roots (in \(q\)) of \(F_i(q,t)\) are a single \(C^\infty\) function \(\lambda^\prime_i(t)\).  It is straightforward to see that (by real-analytic continuation), because \(E\) is discrete, this implies that the \(q\)-degree of each of the \(F_i(q,t)\) is constant on each component of \(E^\prime\), and, furthermore, that the roots of \(f(q,t)\) can be taken to be smooth on each component of \(E^\prime\).

We may conclude the following.

\begin{theorem} Suppose that  \(A = A(t)\) is a real-analytic function in \(t \in [0,1]\) of order \(k\), dimension \(n\) real symmetric hypermatrices.  Then there are functions \(\lambda_i(t)\), \(i \in [n (m-1)^{n-1}]\), so that the \(\lambda_i(t)\) are precisely the eigenvalues of \(A(t)\), and each \(\lambda_i(t)\) is smooth except at finitely many points. 
\end{theorem}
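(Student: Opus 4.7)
The plan is to consolidate the local analysis preceding the theorem into a single global statement about all $N = n(k-1)^{n-1}$ eigenvalues counted with multiplicity. Let $E$, $E'$, $R$, and the local factorization supplied by Lemma 3.2 of \cite{MR2806697} be as in the preceding paragraph; on a neighborhood $I$ of each point $t \in E'$ this produces $R$ smooth complex-valued distinct-root branches of $f(q,t) = \det(A(t) - q\II_n^k)$, each tagged with a multiplicity coming from the corresponding local factor of $f$.

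First I would extend these local branches across each connected component $C$ of $E'$. On the overlap $I \cap I'$ of two such neighborhoods the smooth distinct-root multisets must agree, and because the defining data is real-analytic, the permutation identifying the two labellings is locally constant, hence constant on the overlap. Chaining this identification around $C$ yields $R$ globally smooth distinct-root functions $\mu_1,\ldots,\mu_R$ on $C$, with associated multiplicities $m_1,\ldots,m_R$; since the $m_i$ are locally constant and $C$ is connected, they are constants on $C$ with $m_1 + \cdots + m_R = N$. Listing each $\mu_i$ repeated $m_i$ times produces $N$ smooth functions on $C$ that enumerate $\spec(A(t))$ with multiplicity at every $t \in C$.

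To finish, I would glue across the finite exceptional set $E$. The set $E$ partitions $[0,1]$ into finitely many components $C_1,\ldots,C_\ell$, on each of which we have $N$ smooth eigenvalue functions constructed as above. Any fixed bijection between the $N$ branches on $C_j$ and the $N$ branches on $C_{j+1}$ extends them to functions $\lambda_1,\ldots,\lambda_N$ on all of $[0,1]$, smooth everywhere except at the finitely many points of $E$, and together enumerating the eigenvalues of $A(t)$ with multiplicity at every $t$.

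The main obstacle is establishing constancy of the multiplicities $m_i$ on each component $C$ of $E'$, equivalently constancy of the $q$-degrees of the local factors $F_i(q,t)$ promised by Lemma 3.2 — exactly the ``straightforward'' assertion in the preceding paragraph that deserves explicit justification. The argument I have in mind is by contradiction: a drop in some $m_i$ at an interior point $t_0 \in C$ would force two previously-distinct smooth branches $\mu_i$, $\mu_j$ to coalesce at $t_0$ (to preserve the total degree $N$ of $f$), reducing the distinct-root count $r(t_0)$ below $R$ and thus placing $t_0 \in E$, contradicting $t_0 \in C \subseteq E'$. Once this constancy is pinned down, everything else is bookkeeping plus the choice of a labelling that crosses the isolated points of $E$.
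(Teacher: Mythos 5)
Your overall scheme tracks the paper's closely: find the discrete exceptional set $E$ via the generalized discriminants $\Delta_m f$, invoke Lemma 3.2 of \cite{MR2806697} on each component of $E' = [0,1]\setminus E$ to get locally smooth distinct-root branches, and glue. You correctly flag that the ``straightforward'' constancy-of-multiplicities assertion is the step that needs to be pinned down, and your bookkeeping for extending branches across a connected component and across the finitely many points of $E$ is in the right spirit.

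However, the argument you offer for that main obstacle does not hold up. You claim that a drop in some multiplicity $m_i$ at an interior point $t_0 \in C$ would ``force two previously-distinct smooth branches to coalesce to preserve the total degree $N$.'' That inference is wrong: the total degree can be preserved without any coalescence, simply by a compensating increase in some other multiplicity $m_j$. A drop in $m_i$ from $3$ to $2$ together with an increase in $m_j$ from $1$ to $2$ keeps the degree fixed and the distinct-root count at $R$, so no contradiction with $t_0 \in E'$ arises by that route. What actually rules out this scenario is a different (and simpler) observation: Lemma 3.2 already produces, on each local interval $I \subset E'$, a factorization $f = F_1\cdots F_R$ with each $F_i$ of \emph{fixed} $q$-degree $d_i$ and a single smooth root, so the multiplicities $m_i = d_i$ are locally constant by construction; on overlaps the distinct-root multisets agree and the factorization of a polynomial into powers of distinct monic linear factors is unique, so the degrees must match after the relabeling permutation; hence the $m_i$ are constant on the connected component. (Equivalently, one can argue that each set $\{t : m_i(t) \geq j\}$ is the zero set of a real-analytic function $\partial^{j-1}f/\partial q^{j-1}(\mu_i(t),t)$, hence either all of $C$ or discrete, so multiplicities can only jump \emph{up} at isolated points; but $\sum_i m_i(t) \equiv N$ forbids any net increase.) With this correction the rest of your gluing argument goes through.
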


\section{Acknowledgments}
Thank you to Alexander Auerback, Liqun Qi, Vladimir Nikiforov, Armin Rainer, and Jose Israel Rodriguez for helpful discussions during the development of the present manuscript.

\bibliographystyle{siam}
\bibliography{hmp-refs}

\def\cprime{$'$}
\begin{thebibliography}{10}

\bibitem{MR3163144}
{\sc C.~Bu, X.~Zhang, J.~Zhou, W.~Wang, and Y.~Wei}, {\em The inverse, rank and
  product of tensors}, Linear Algebra Appl., 446 (2014), pp.~269--280.

\bibitem{MR2996375}
{\sc D.~Cartwright and B.~Sturmfels}, {\em The number of eigenvalues of a
  tensor}, Linear Algebra Appl., 438 (2013), pp.~942--952.

\bibitem{MR3141883}
{\sc K.~Chang, L.~Qi, and T.~Zhang}, {\em A survey on the spectral theory of
  nonnegative tensors}, Numer. Linear Algebra Appl., 20 (2013), pp.~891--912.

\bibitem{MR2435198}
{\sc K.~C. Chang, K.~Pearson, and T.~Zhang}, {\em Perron-{F}robenius theorem
  for nonnegative tensors}, Commun. Math. Sci., 6 (2008), pp.~507--520.

\bibitem{MR2476927}
\leavevmode\vrule height 2pt depth -1.6pt width 23pt, {\em On eigenvalue
  problems of real symmetric tensors}, J. Math. Anal. Appl., 350 (2009),
  pp.~416--422.

\bibitem{MR2900714}
{\sc J.~Cooper and A.~Dutle}, {\em Spectra of uniform hypergraphs}, Linear
  Algebra Appl., 436 (2012), pp.~3268--3292.

\bibitem{1301.4590}
{\sc J.~Cooper and A.~Dutle}, {\em Computing hypermatrix spectra with the
  poisson product formula}, Linear and Multilinear Algebra, 63 (2015),
  pp.~956--970.

\bibitem{MR1405722}
{\sc K.~Feng and W.-C.~W. Li}, {\em Spectra of hypergraphs and applications},
  J. Number Theory, 60 (1996), pp.~1--22.

\bibitem{MR2996365}
{\sc S.~Friedland, S.~Gaubert, and L.~Han}, {\em Perron-{F}robenius theorem for
  nonnegative multilinear forms and extensions}, Linear Algebra Appl., 438
  (2013), pp.~738--749.

\bibitem{MR1325271}
{\sc J.~Friedman and A.~Wigderson}, {\em On the second eigenvalue of
  hypergraphs}, Combinatorica, 15 (1995), pp.~43--65.

\bibitem{FurediKomlos}
{\sc Z.~F\"{u}redi and J.~Koml\'{o}s}, {\em The eigenvalues of random symmetric
  matrices}, Combinatorica, 1 (1981), pp.~233--241.

\bibitem{MR2394437}
{\sc I.~M. Gelfand, M.~M. Kapranov, and A.~V. Zelevinsky}, {\em Discriminants,
  resultants and multidimensional determinants}, Modern Birkh\"auser Classics,
  Birkh\"auser Boston, Inc., Boston, MA, 2008.
\newblock Reprint of the 1994 edition.

\bibitem{MR2918215}
{\sc E.~K. Gnang, A.~Elgammal, and V.~Retakh}, {\em A spectral theory for
  tensors}, Ann. Fac. Sci. Toulouse Math. (6), 20 (2011), pp.~801--841.

\bibitem{Lim_singularvalues}
{\sc L.~heng Lim}, {\em Singular values and eigenvalues of tensors: a
  variational approach}, in in CAMAP2005: 1st IEEE International Workshop on
  Computational Advances in Multi-Sensor Adaptive Processing, Lim, 2005,
  pp.~129--132.

\bibitem{MR2974300}
{\sc S.~Hu and L.~Qi}, {\em Algebraic connectivity of an even uniform
  hypergraph}, J. Comb. Optim., 24 (2012), pp.~564--579.

\bibitem{MR3175063}
\leavevmode\vrule height 2pt depth -1.6pt width 23pt, {\em The eigenvectors
  associated with the zero eigenvalues of the {L}aplacian and signless
  {L}aplacian tensors of a uniform hypergraph}, Discrete Appl. Math., 169
  (2014), pp.~140--151.

\bibitem{MR3116407}
{\sc S.~Hu, L.~Qi, and J.-Y. Shao}, {\em Cored hypergraphs, power hypergraphs
  and their {L}aplacian {H}-eigenvalues}, Linear Algebra Appl., 439 (2013),
  pp.~2980--2998.

\bibitem{MR1335452}
{\sc T.~Kato}, {\em Perturbation theory for linear operators}, Classics in
  Mathematics, Springer-Verlag, Berlin, 1995.
\newblock Reprint of the 1980 edition.

\bibitem{1304.0050}
{\sc P.~Keevash, J.~Lenz, and D.~Mubayi}, {\em Spectral extremal problems for
  hypergraphs}, SIAM J. Discrete Math., 28 (2014), pp.~1838--1854.

\bibitem{1208.4863}
{\sc J.~Lenz and D.~Mubayi}, {\em Eigenvalues and linear quasirandom
  hypergraphs}, Forum Math. Sigma, 3 (2015), pp.~e2, 26.

\bibitem{1208.5978}
\leavevmode\vrule height 2pt depth -1.6pt width 23pt, {\em The poset of
  hypergraph quasirandomness}, Random Structures Algorithms, 46 (2015),
  pp.~762--800.

\bibitem{1309.3584}
\leavevmode\vrule height 2pt depth -1.6pt width 23pt, {\em Eigenvalues of
  non-regular linear quasirandom hypergraphs}, Discrete Math., 340 (2017),
  pp.~145--153.

\bibitem{LiQiYu}
{\sc G.~Li, L.~Qi, and G.~Yu}, {\em Semismoothness of the maximum eigenvalue
  function of a symmetric tensor and its application}, Linear Algebra \& Its
  Applications, 438 (2013), pp.~813--833.

\bibitem{MR2993134}
{\sc L.~Lu and X.~Peng}, {\em Loose {L}aplacian spectra of random hypergraphs},
  Random Structures Algorithms, 41 (2012), pp.~521--545.

\bibitem{MR3021338}
\leavevmode\vrule height 2pt depth -1.6pt width 23pt, {\em High-order random
  walks and generalized {L}aplacians on hypergraphs}, Internet Math., 9 (2013),
  pp.~3--32.

\bibitem{MR0029532}
{\sc L.~Lyusternik and L.~{\v{S}}nirel{\cprime}man}, {\em Topological methods
  in variational problems and their application to the differential geometry of
  surfaces}, Uspehi Matem. Nauk (N.S.), 2 (1947), pp.~166--217.

\bibitem{MR1814089}
{\sc M.~G. Mart{\'{\i}}nez, H.~M. Stark, and A.~A. Terras}, {\em Some
  {R}amanujan hypergraphs associated to {${\rm GL}(n,{\mathbb F}_q)$}}, Proc.
  Amer. Math. Soc., 129 (2001), pp.~1623--1629 (electronic).

\bibitem{MR3230456}
{\sc V.~Nikiforov}, {\em Analytic methods for uniform hypergraphs}, Linear
  Algebra Appl., 457 (2014), pp.~455--535.

\bibitem{1309.2151}
{\sc A.~Parusinski and A.~Rainer}, {\em Regularity of roots of polynomials},
  2013.

\bibitem{MR2178089}
{\sc L.~Qi}, {\em Eigenvalues of a real supersymmetric tensor}, J. Symbolic
  Comput., 40 (2005), pp.~1302--1324.

\bibitem{MR3194370}
\leavevmode\vrule height 2pt depth -1.6pt width 23pt, {\em {$H^+$}-eigenvalues
  of {L}aplacian and signless {L}aplacian tensors}, Commun. Math. Sci., 12
  (2014), pp.~1045--1064.

\bibitem{MR3148904}
{\sc L.~Qi, J.-Y. Shao, and Q.~Wang}, {\em Regular uniform hypergraphs,
  {$s$}-cycles, {$s$}-paths and their largest {L}aplacian {H}-eigenvalues},
  Linear Algebra Appl., 443 (2014), pp.~215--227.

\bibitem{MR2806697}
{\sc A.~Rainer}, {\em Quasianalytic multiparameter perturbation of polynomials
  and normal matrices}, Trans. Amer. Math. Soc., 363 (2011), pp.~4945--4977.

\bibitem{MR1890984}
{\sc J.~A. Rodr{\'{\i}}guez}, {\em On the {L}aplacian eigenvalues and metric
  parameters of hypergraphs}, Linear Multilinear Algebra, 50 (2002), pp.~1--14.

\bibitem{MR1995660}
\leavevmode\vrule height 2pt depth -1.6pt width 23pt, {\em On the {L}aplacian
  spectrum and walk-regular hypergraphs}, Linear Multilinear Algebra, 51
  (2003), pp.~285--297.

\bibitem{MR2523606}
\leavevmode\vrule height 2pt depth -1.6pt width 23pt, {\em Laplacian
  eigenvalues and partition problems in hypergraphs}, Appl. Math. Lett., 22
  (2009), pp.~916--921.

\bibitem{MR2472189}
{\sc S.~Rota~Bul{\`o} and M.~Pelillo}, {\em A generalization of the
  {M}otzkin-{S}traus theorem to hypergraphs}, Optim. Lett., 3 (2009),
  pp.~287--295.

\bibitem{MR3091308}
{\sc J.-Y. Shao}, {\em A general product of tensors with applications}, Linear
  Algebra Appl., 439 (2013), pp.~2350--2366.

\bibitem{MR2219034}
{\sc S.~Tanab{\'e} and M.~N. Vrahatis}, {\em On perturbation of roots of
  homogeneous algebraic systems}, Math. Comp., 75 (2006), pp.~1383--1402
  (electronic).

\bibitem{MR2906465}
{\sc T.~Tao}, {\em Topics in random matrix theory}, vol.~132 of Graduate
  Studies in Mathematics, American Mathematical Society, Providence, RI, 2012.

\bibitem{sagemath}
{\sc {\relax The Sage Developers}}, {\em {S}ageMath, the {S}age {M}athematics
  {S}oftware {S}ystem ({V}ersion 8.1)}, 2017.
\newblock {\tt http://www.sagemath.org}.

\bibitem{MR1294431}
{\sc J.~Verschelde and A.~Haegemans}, {\em Homotopies for solving polynomial
  systems within a bounded domain}, Theoret. Comput. Sci., 133 (1994),
  pp.~165--185.
\newblock Selected papers of the Workshop on Continuous Algorithms and
  Complexity (Barcelona, 1993).

\bibitem{MR3181972}
{\sc Z.~Wang and W.~Wu}, {\em Bounds for the greatest eigenvalue of positive
  tensors}, J. Ind. Manag. Optim., 10 (2014), pp.~1031--1039.

\bibitem{MR3091297}
{\sc J.~Xie and A.~Chang}, {\em On the {Z}-eigenvalues of the adjacency tensors
  for uniform hypergraphs}, Linear Algebra Appl., 439 (2013), pp.~2195--2204.

\bibitem{MR3141891}
\leavevmode\vrule height 2pt depth -1.6pt width 23pt, {\em On the
  {Z}-eigenvalues of the signless {L}aplacian tensor for an even uniform
  hypergraph}, Numer. Linear Algebra Appl., 20 (2013), pp.~1030--1045.

\bibitem{MR2854611}
{\sc Q.~Yang and Y.~Yang}, {\em Further results for {P}erron-{F}robenius
  theorem for nonnegative tensors {II}}, SIAM J. Matrix Anal. Appl., 32 (2011),
  pp.~1236--1250.

\bibitem{MR2685169}
{\sc Y.~Yang and Q.~Yang}, {\em Further results for {P}erron-{F}robenius
  theorem for nonnegative tensors}, SIAM J. Matrix Anal. Appl., 31 (2010),
  pp.~2517--2530.

\bibitem{MR3141884}
{\sc G.~Zhou, L.~Qi, and S.-Y. Wu}, {\em On the largest eigenvalue of a
  symmetric nonnegative tensor}, Numer. Linear Algebra Appl., 20 (2013),
  pp.~913--928.

\end{thebibliography}

\end{document}